\newtheorem{theorem}{Theorem}[section]
\newtheorem{corollary}[theorem]{Corollary}
\newtheorem{lemma}[theorem]{Lemma}
\newtheorem{proposition}[theorem]{Proposition}
\theoremstyle{definition}
\newtheorem{definition}[theorem]{Definition}
\theoremstyle{remark}
\numberwithin{equation}{section}
\def\FF{\mathcal{F}}
\def\NN{\mathbb{N}}
\def\chain#1#2#3#4{(#1_{#3}, #2_{#3}),\ldots, (#1_{#4}, #2_{#4})}  %usage \chain ab1n
\newcommand{\HH}{\mathbb{H}}
\newcommand{\bn}{\mathbb{N}}
\newcommand{\calI}{{\mathcal {I}}}
\newcommand{\calJ}{{\mathcal {J}}}
\newcommand{\spa}{\operatorname{span}}
\newcommand{\rank}{\operatorname{rank}}
\newcommand{\M}{(MD)}
\newcommand{\bq}{\begin{equation}}
\newcommand{\eq}{\end{equation}}
\renewcommand{\span}{\mathop{\mathrm{span}}}
\begin{document}

\title{Spanning and independence properties of frame partitions}

\author[B. G. Bodmann]{Bernhard G. Bodmann}

\author[P. G. Casazza]{Peter G. Casazza}

\author[V. I. Paulsen]{Vern I. Paulsen}

\author[D. Speegle]{Darrin Speegle}
%\date{}                                           % Activate to display a given date or no date

\thanks{The first author was supported by
NSF Grant DMS-0807399, the second author by
NSF DMS-0704216, the third author by NSF DMS-0600191 and the fourth author by NSF DMS-0354957}

\begin{abstract}
We answer a number of open problems in frame theory concerning
the decomposition of frames into linearly independent and/or spanning sets.
We prove that  in finite dimensional Hilbert spaces, Parseval frames
with norms bounded away from $1$ can be decomposed into
a number of sets whose complements are spanning, where the number of these sets only depends
on the norm bound. We also prove, assuming the Kadison-Singer conjecture is true, that this
holds for infinite dimensional Hilbert spaces. Further, we prove
a stronger result for
Parseval frames whose norms
are uniformly small, which shows that in addition to
the spanning property, the sets can be chosen to be independent, and
the complement of each set to contain a number of disjoint, spanning sets.
\end{abstract}

\maketitle
%\section{}
%\subsection{}

\section{Introduction}

A family of vectors $\{f_i\}_{i\in I}$ is a \textit{frame} for a Hilbert space
$\HH$ if there are constants $0<A\le B<\infty$ satisfying 
\[ A\|x\|^2 \le \sum_{i\in I}|\langle x,f_i\rangle|^2 \le B \|x\|^2,\ \ 
\mbox{for all $x\in \HH$}.\]

The theory of frames in Hilbert spaces has applications covering a broad spectrum of problems in pure
mathematics, applied mathematics and engineering \cite{CT}.  Many fundamental
questions in frame theory involve determining the extent to which
frames can be decomposed into subsets which to some extent resemble
bases.
It is known that these problems are generally difficult to resolve.  For example,
work of the second author shows that decomposing frames into subsets
which are Riesz basic sequences
is equivalent to an important open problem
in analysis -- the 1959 Kadison-Singer
Problem \cite{CT}.  

In this note we will answer a number of open problems concerning the
decomposition of frames into linearly independent and/or spanning sets.
The solutions to these problems, for finite dimensional Hilbert spaces, requires some non-trivial
variations of the Rado-Horn Theorem, which is itself rather delicate. 
We prove that some infinite dimensional analogues of these results would have a positive answer
if the Kadison-Singer Problem has a positive answer.  In particular, the fact that the number 
$R$ appearing in Theorem~3.2 can be chosen 
independent of the dimension of the underlying Hilbert space is 
implied by the assumption that the Kadison-Singer Problem has a
positive answer. Indeed, one of the motivations that led to the study of
these consequences of Kadison-Singer was, initially, a quest for a negative answer to the Kadison-Singer Problem.  However, our results verify that these consequences of a positive answer to the Kadison-Singer Problem are, in fact, true.

In Section~2, we derive a result about Parseval frames with norms bounded away from $1$ that is a consequence of the assumption that the Kadison-Singer Problem has a positive answer. We will see that these results involve questions about spanning sets.  In Section~3, we then show that the finite dimensional versions of these results are uniformly true, that is, with constants that do not depend on the dimension, without the need to assume that the Kadison-Singer Problem has a positive answer. 
The results of Section~3 are in some sense refinements of the Rado-Horn theory and rely strongly on earlier refinements of 
Rado-Horn obtained by the second and fourth authors together with Kutyniok. Section~4 is a further development which 
investigates the consequences of having Parseval frames with uniformly small norms.

%%%%%%%%%%%%%%%%%%%%%%%%%%%%%%%%%%%%%%%%%%%%%%%%%%%%%%%%%%%%%%%%%%%%%%%%%%%%%%%

\section{Kadison-Singer and Spanning Properties for Frame Partitions}

In this section, we begin with a few observations about Parseval frames and prove a result about spanning sets for Parseval frames whose norms are bounded away from $1$, assuming that the Kadison-Singer Problem has a positive answer.

Given a family of vectors $\mathcal F = \{ f_i \}_{i \in S}$ in a Hilbert space $\HH,$ where $S$ is some index set, and a subset $B \subseteq S,$ we write $\mathcal F_B = \{ f_i\}_{i \in B }$
and let $\HH_B$ denote the closed linear span of $\mathcal F_B.$  Recall that if $\{ f_i \}_{i \in S}$ is a Parseval frame for $\HH$ and $P$ is the orthogonal projection onto some closed subspace, then $\{ P f_i \}_{i \in S}$ is a Parseval frame for $P(\HH).$ Also, recall that $\{ f_i \}_{i \in S}$ is a Parseval frame for $\HH$ if and only if the Gram matrix $G= ( \langle f_j, f_i \rangle)_{i,j \in S}$ is the matrix of a projection operator on $\ell^2(S).$

We begin with a few useful observations.

\begin{proposition}\label{2.1} Let $\{ f_i \}_{i \in S}$ be a Parseval frame for $\HH,$ let $P$ be an orthogonal projection onto a closed subspace of $\HH$ and let $I$ denote the identity operator on $\HH.$ Then $G= ( \langle f_j, f_i \rangle)_{i,j,\in S},$ $R = ( \langle Pf_j, Pf_i \rangle)_{i,j\in S}$ and $Q= ( \langle (I-P)f_j, (I-P)f_i \rangle )_{i,j\in S}$ are the matrices of projection operators on $\ell^2(S)$ with $G= R + Q.$
Moreover, $P=I$ if and only if $1$ is not an eigenvalue of $Q.$ 
\end{proposition}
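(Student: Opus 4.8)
The plan is to reduce everything to the two recalled facts: that $\{f_i\}_{i\in S}$ is a Parseval frame for $\HH$ exactly when its Gram matrix is a projection on $\ell^2(S)$, and that compressing a Parseval frame by an orthogonal projection yields a Parseval frame for the image space. Since $P$ and $I-P$ are both orthogonal projections, $\{Pf_i\}_{i\in S}$ is a Parseval frame for $P(\HH)$ and $\{(I-P)f_i\}_{i\in S}$ is a Parseval frame for $(I-P)(\HH)$; hence $R$ and $Q$ are the Gram matrices of these Parseval frames and are therefore projections on $\ell^2(S)$, while $G$ is a projection by the hypothesis on $\{f_i\}$.

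Next I would verify $G=R+Q$ entrywise. Writing $f_i=Pf_i+(I-P)f_i$ and expanding $\langle f_j,f_i\rangle$ produces, besides the terms $\langle Pf_j,Pf_i\rangle$ and $\langle (I-P)f_j,(I-P)f_i\rangle$, two cross terms of the form $\langle Pf_j,(I-P)f_i\rangle$. These vanish because $P$ is self-adjoint and $P(I-P)=0$, so the ranges of $P$ and $I-P$ are orthogonal. Adding the surviving terms gives $G_{ij}=R_{ij}+Q_{ij}$ for all $i,j$, i.e. $G=R+Q$ as operators.

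For the final equivalence, I would first observe that since $Q$ is a projection, its only possible eigenvalues are $0$ and $1$, and a nonzero projection always has $1$ in its point spectrum (if $Qv\neq 0$ then $Q(Qv)=Qv$); hence ``$1$ is not an eigenvalue of $Q$'' is equivalent to ``$Q=0$''. Then $Q=0$ forces the diagonal entries $\|(I-P)f_i\|^2$ to vanish, so $(I-P)f_i=0$ for every $i$; since the linear span of a frame is dense in $\HH$, the operator $I-P$ vanishes on a dense set and therefore $I-P=0$, i.e. $P=I$. The converse is immediate: $P=I$ gives $(I-P)f_i=0$ for all $i$, whence $Q=0$. (Equivalently, $Q=0$ iff the Parseval frame $\{(I-P)f_i\}$ spans only $\{0\}$, i.e. iff $(I-P)(\HH)=\{0\}$.)

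There is no serious obstacle; the statement is essentially a bookkeeping consequence of the recalled facts. The only points requiring a moment's care are the remark that a nonzero projection necessarily has $1$ as an eigenvalue (so the eigenvalue condition genuinely encodes $Q=0$) and the use of the density of the span of a frame to pass from ``$I-P$ annihilates every $f_i$'' to ``$I-P=0$''.
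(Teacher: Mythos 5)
Your proposal is correct and follows essentially the same route as the paper: both identify $R$ and $Q$ as Gram matrices of the projected Parseval frames (hence projections), obtain $G=R+Q$ by splitting $f_j=Pf_j+(I-P)f_j$ in the inner product, and reduce the eigenvalue condition to $Q=0$ iff $P=I$. You merely make explicit two small steps the paper leaves implicit, namely that a nonzero projection has $1$ as an eigenvalue and that $Q=0$ forces $I-P$ to vanish on the dense span of the frame.
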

\begin{proof} The equality $G = R+Q$ is immediate from 
$R=( \langle Pf_j, f_i \rangle)$ and $Q= ( \langle (I-P)f_j, f_i \rangle )$
for each $i,j\in S$,
and from the linearity of the inner product in the first entry.
The fact that $G, Q$ and $R$ are matrices of projections follows from the fact that the vectors $\{f_i\}_{i \in S}$
form a Parseval frame.

For the final statement, note that $P=I$ if and only if $Q=0.$ But since $Q$ is a projection, $Q=0$ if and only if $1$ is not an eigenvalue.
\end{proof} 

Given a subset $B \subseteq S,$ we let $D_B=(d_{i,j})_{i,j \in S}$ denote the bounded operator 
on $\ell^2(S)$ whose matrix is the diagonal matrix with $d_{i,i} =1$ when $i \in B$ and $d_{i,j}=0$ when $i \in B^c$ or $j \in B^c$, the complement of the set $B.$

\begin{proposition}\label{2.2}  Let $\{ f_i \}_{i \in S}$ be a Parseval frame for $\HH,$ let $G=(\langle f_j , f_i \rangle)_{i,j \in S}$ denote its Gram matrix,  and let $B \subseteq S.$ Then $\HH_B = \HH$ if and only if $1$ is not an eigenvalue of $D_{B^c}GD_{B^c}.$
\end{proposition}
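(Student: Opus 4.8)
The plan is to view both $G$ and $D_{B^c}$ as orthogonal projections on $\ell^2(S)$ and reduce the assertion to a statement about when their ranges meet. For a Parseval frame the analysis operator $\Theta\colon\HH\to\ell^2(S)$, $\Theta x=(\langle x,f_i\rangle)_{i\in S}$, is an isometry, so $W:=\operatorname{ran}\Theta$ is closed and $G=\Theta\Theta^{*}$ is the orthogonal projection of $\ell^2(S)$ onto $W$; also $D_{B^c}$ is the orthogonal projection onto the coordinate subspace $\ell^2(B^c):=\{\xi\in\ell^2(S):\xi_i=0\text{ for all }i\in B\}$. Consequently $T:=D_{B^c}GD_{B^c}$ is a positive contraction, and the proposition follows from two facts: (i) $1$ is an eigenvalue of $T$ if and only if $W\cap\ell^2(B^c)\ne\{0\}$; and (ii) $W\cap\ell^2(B^c)\ne\{0\}$ if and only if $\HH_B\ne\HH$.

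For (i), if $0\ne\xi\in W\cap\ell^2(B^c)$ then $G\xi=\xi$ and $D_{B^c}\xi=\xi$, whence $T\xi=\xi$. Conversely, suppose $T\xi=\xi$ with $\xi\ne0$. Then $\xi\in\operatorname{ran}D_{B^c}$, so $D_{B^c}\xi=\xi$, and therefore $D_{B^c}G\xi=\xi$; pairing with $\xi$ and using $G=G^{*}=G^2$ and $D_{B^c}=D_{B^c}^{*}$ gives $\|G\xi\|^2=\langle G\xi,\xi\rangle=\langle D_{B^c}G\xi,\xi\rangle=\|\xi\|^2$. Since $G$ is a projection, $\|G\xi\|=\|\xi\|$ forces $G\xi=\xi$, so $\xi\in W\cap\ell^2(B^c)$.

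For (ii), every $\xi\in W$ is of the form $\xi=\Theta x=(\langle x,f_i\rangle)_{i\in S}$ for a unique $x\in\HH$ (uniqueness because $\Theta$ is injective), and then $\xi\in\ell^2(B^c)$ says precisely that $\langle x,f_i\rangle=0$ for all $i\in B$, i.e.\ $x\in\HH_B^{\perp}$; moreover $\xi\ne0$ iff $x\ne0$. Hence $W\cap\ell^2(B^c)\ne\{0\}$ iff $\HH_B^{\perp}\ne\{0\}$ iff $\HH_B\ne\HH$, and combining with (i) proves the proposition. As an alternative one may note that $D_{B^c}GD_{B^c}=(D_{B^c}\Theta)(D_{B^c}\Theta)^{*}$ has the same nonzero point spectrum as $(D_{B^c}\Theta)^{*}(D_{B^c}\Theta)=I-S_B$, where $S_B=\sum_{i\in B}\langle\cdot,f_i\rangle f_i\ge 0$ has kernel $\HH_B^{\perp}$, so that $1$ is an eigenvalue of $D_{B^c}GD_{B^c}$ iff $0$ is an eigenvalue of $S_B$ iff $\HH_B\ne\HH$.

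The one step that is not routine bookkeeping is (i) — the fact that the compression of a projection to a subspace has $1$ as an eigenvalue exactly when the two subspaces intersect nontrivially. The forward implication is the delicate part: it uses that $T$ is a positive contraction and that equality in $\|G\xi\|\le\|\xi\|$ for a projection $G$ forces $G\xi=\xi$, a point where one must be careful in the infinite-dimensional setting. Everything else reduces to the standard dictionary between a Parseval frame and its analysis operator.
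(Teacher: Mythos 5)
Your proof is correct. It takes a somewhat different and more unified route than the paper's. The paper argues the two directions asymmetrically: for the forward direction it invokes Proposition~2.1, writing $G=R+Q$ with $Q$ the Gram matrix of $\{(I-P)f_i\}$ for $P$ the projection onto $\HH_B$, observing that $Q$ is supported on $B^c\times B^c$ so that $0\le Q=D_{B^c}QD_{B^c}\le D_{B^c}GD_{B^c}$, and then concluding from the absence of eigenvalue $1$ for the larger operator; for the converse it uses the analysis operator and the fact that $(V^*D_{B^c})^*(V^*D_{B^c})$ and $(V^*D_{B^c})(V^*D_{B^c})^*$ share nonzero eigenvalues --- which is exactly your closing ``alternative'' remark, with $S_B=\Theta^*D_B\Theta$. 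Your main argument instead reduces everything to a single clean lemma: for two orthogonal projections $G$ and $D_{B^c}$, the compression $D_{B^c}GD_{B^c}$ has eigenvalue $1$ precisely when $\operatorname{ran}G\cap\operatorname{ran}D_{B^c}\ne\{0\}$, proved correctly via the Pythagorean identity $\|\xi\|^2=\|G\xi\|^2+\|(I-G)\xi\|^2$; combined with the dictionary $W\cap\ell^2(B^c)\cong\HH_B^{\perp}$ this gives both implications at once and does not need Proposition~2.1 at all. What you lose relative to the paper is the explicit operator inequality $0\le Q\le D_{B^c}GD_{B^c}\le D_{B^c}$, which the paper reuses in spirit in the proof of the paving theorem (where one bounds $\|D_{A_k}GD_{A_k}\|$ away from $1$); what you gain is a symmetric, self-contained argument whose only nontrivial ingredient is the standard fact about compressions of projections.
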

\begin{proof} Let $P$ denote the projection onto $\HH_B$ and apply Proposition \ref{2.1}. Since for $j \in B, f_j \in \HH_B,$ we have that when $j \in B$ then $\langle f_j, f_i \rangle = \langle P f_j, P f_i \rangle .$  More generally, the matrices $G$ and $R$ are equal in any entry $(i,j)$ provided that $i \in B$ or $j \in B.$ Thus, the matrix $Q$ must be $0$ in any such entry.
Hence we obtain the operator inequalities $0 \le Q = D_{B^c}QD_{B^c} \le D_{B^c}GD_{B^c} \le D_{B^c}.$ 

Now, if $1$ is not an eigenvalue of $D_{B^c}GD_{B^c},$ then these inequalities imply that $1$ is not an eigenvalue of $Q.$ Invoking the preceding proposition, we get $P=I,$ and so $\HH_B = \HH.$

Conversely, assume that $1$ is an eigenvalue of $D_{B^c}GD_{B^c}.$
Write $G=V V^*$ where $V: \HH \to \ell^2(S)$ is the analysis operator
of the Parseval frame. Since $D_{B^c}GD_{B^c} = (V^*D_{B^c})^*(V^*D_{B^c}),$
we have that $(V^*D_{B^c})(V^*D_{B^c})^* = V^* D_{B^c} V$ also has
eigenvalue $1.$ By the Parseval property, $V$ is an isometry, $V^* V = I$, necessarily 
$V^* D_{B} V= I - V^* D_{B^c} V$ has eigenvalue zero. Thus the range of $V^* D_B$ is orthogonal to
the corresponding eigenvectors. Since the closure of the range of $V^* D_B$ is by definition $\HH_B$, it is not equal to $\HH$.
%has a non-trivial kernel and thus its range cannot be dense in $\HH$. The range  is
%This means that there is
%a unit vector $\lambda= \sum_{j \in B^c} \lambda_j e_j \in \ell^2(S),$ 
%for which
%$ 1 = \|\lambda\|^2 = \langle D_{B^c}GD_{B^c} \lambda, \lambda \rangle$.
%Setting $v = \sum_{j \in B^c} \lambda_j f_j,$ then by the Parseval property
%\[ 1 = \langle D_{B^c}GD_{B^c} \lambda, \lambda \rangle = \|v\|^2 .\]
%Also, for any $i \in B^c,$ we have that 
%\[ \langle v, f_i \rangle = \sum_{j \in B^c} \lambda_j \langle f_j, f_i \rangle = \lambda_i .\]
%Hence, $\|v\|^2 = \sum_{i \in B^c} |\langle v, f_i \rangle|^2,$ and since $\{f_j \}_{j \in S}$ is a Parseval frame $\sum_{i \in B} |\langle v,f_i \rangle |^2 =0.$
%But this implies that $v$ is orthogonal to $\HH_B$ and so $\HH_B \ne \HH.$
\end{proof}

The last result yields a complementarity principle between spanning and linear independence.

\begin{proposition} Let $\HH$ be a Hilbert space with orthonormal basis $\{ e_j\}_{j \in S },$ let $P$ be the orthogonal projection onto a closed subspace of $\HH,$ and let $B \subseteq S.$  Then the linear span of $\{Pe_j\}_{ j \in B }$ is dense in $P(\HH)$ if and only if the operator  $(\langle (I-P)e_j, (I-P)e_i \rangle)_{i,j \in B^c}$ on $\ell^2(B^c)$ is one-to-one.
\end{proposition}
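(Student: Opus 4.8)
The plan is to reduce the statement to Proposition~\ref{2.2} applied to the Parseval frame $\{Pe_j\}_{j\in S}$ for $P(\HH)$. First I would observe that $\{e_j\}_{j\in S}$, being an orthonormal basis, is a Parseval frame for $\HH$, so by the fact recalled before Proposition~\ref{2.1} the family $\mathcal F=\{Pe_j\}_{j\in S}$ is a Parseval frame for the Hilbert space $P(\HH)$. Its Gram matrix is $G=(\langle Pe_j,Pe_i\rangle)_{i,j\in S}$, and Proposition~\ref{2.2}, read with ambient space $P(\HH)$ in place of $\HH$, says that the closed linear span of $\{Pe_j\}_{j\in B}$ equals $P(\HH)$ --- i.e.\ the span is dense in $P(\HH)$ --- if and only if $1$ is not an eigenvalue of $D_{B^c}GD_{B^c}$ as an operator on $\ell^2(S)$.

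Next I would pass from $\ell^2(S)$ down to $\ell^2(B^c)$. Writing $\ell^2(S)=\ell^2(B)\oplus\ell^2(B^c)$, the operator $D_{B^c}GD_{B^c}$ annihilates $\ell^2(B)$ and leaves $\ell^2(B^c)$ invariant, acting there as the compression $R:=(\langle Pe_j,Pe_i\rangle)_{i,j\in B^c}$; thus $D_{B^c}GD_{B^c}=0\oplus R$. Since $1\neq 0$, any eigenvector for the eigenvalue $1$ of $0\oplus R$ must have trivial component in $\ell^2(B)$, so $1$ is an eigenvalue of $D_{B^c}GD_{B^c}$ on $\ell^2(S)$ if and only if $1$ is an eigenvalue of $R$ on $\ell^2(B^c)$.

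Finally I would use orthonormality to identify $R$ with the complement of the operator in the statement. For $i,j\in B^c$ one has $\langle Pe_j,Pe_i\rangle+\langle (I-P)e_j,(I-P)e_i\rangle=\langle e_j,e_i\rangle=\delta_{ij}$, so on $\ell^2(B^c)$ the operator $Q:=(\langle (I-P)e_j,(I-P)e_i\rangle)_{i,j\in B^c}$ satisfies $Q=I-R$. Hence $Qx=0$ exactly when $Rx=x$, so $Q$ is one-to-one if and only if $1$ is not an eigenvalue of $R$. Chaining the three equivalences --- span dense $\iff$ $1\notin\sigma_p(D_{B^c}GD_{B^c})$ $\iff$ $1\notin\sigma_p(R)$ $\iff$ $Q$ injective --- yields the proposition.

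I do not anticipate a serious obstacle. The only points needing a little care are checking that the orthonormal basis genuinely produces a Parseval frame (so Proposition~\ref{2.2} applies) and handling the passage between the operator $D_{B^c}GD_{B^c}$ on $\ell^2(S)$ and its compression to $\ell^2(B^c)$ so that ``eigenvalue $1$'' is used consistently; the identity $R+Q=I$ on $\ell^2(B^c)$ is the only genuinely new ingredient, and it follows at once from $\langle e_j,e_i\rangle=\delta_{ij}$.
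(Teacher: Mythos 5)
Your argument is correct and is essentially the paper's own proof: apply Proposition~\ref{2.2} to the Parseval frame $\{Pe_j\}_{j\in S}$ of $P(\HH)$, and then use the identity $I_{\ell^2(B^c)}-(\langle Pe_j,Pe_i\rangle)_{i,j\in B^c}=(\langle (I-P)e_j,(I-P)e_i\rangle)_{i,j\in B^c}$ coming from orthonormality. The only difference is that you spell out the (routine) reduction from $D_{B^c}GD_{B^c}$ on $\ell^2(S)$ to its compression on $\ell^2(B^c)$, which the paper leaves implicit.
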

\begin{proof} Note that the set $\{Pe_j: j \in S \}$ is a Parseval frame for $P(\HH).$ Hence, the  span of $\{Pe_j:j \in B \}$ is dense in $P(\HH)$ if and only if the matrix $Q= (\langle Pe_j, Pe_i \rangle)_{i,j \in B_c}$ does not have 1 as an eigenvalue. But since $I_{\ell^2(B^c)} - Q = (\langle (I-P)e_j, (I-P)e_i \rangle)_{i,j \in B^c},$ $Q$ not having eigenvalue 1 is equivalent to the latter matrix having a trivial kernel.
\end{proof}

\begin{corollary}
If $\HH$ is finite dimensional, then $\{Pe_j\}_{ j \in B }$ spans $P(\HH)$ if and only if the set $\{ (I-P)e_j: j \in B^c \}$ is linearly independent. 
\end{corollary}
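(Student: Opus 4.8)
The plan is to reduce the statement to the preceding proposition together with the standard characterization of linear independence by the Gram matrix. First, since $\HH$ is finite dimensional and $\{e_j\}_{j\in S}$ is an orthonormal basis, the index set $S$ is finite, hence so is $B^c$; moreover in a finite dimensional space every linear subspace is closed, so the linear span of $\{Pe_j\}_{j\in B}$ is dense in $P(\HH)$ precisely when it equals $P(\HH)$. Thus the previous proposition already tells us that $\{Pe_j\}_{j\in B}$ spans $P(\HH)$ if and only if the (now finite, square) matrix $M = (\langle (I-P)e_j, (I-P)e_i\rangle)_{i,j\in B^c}$ is one-to-one.

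It then remains to identify the kernel of $M$ with the space of linear dependencies among the vectors $\{(I-P)e_j : j\in B^c\}$. Writing $v_j = (I-P)e_j$ for $j\in B^c$ and letting $c = (c_j)_{j\in B^c}$ be a scalar sequence, one computes (using linearity of the inner product in the first entry) that the $i$-th coordinate of $Mc$ equals $\langle \sum_{j} c_j v_j, v_i\rangle$; hence $Mc = 0$ if and only if $\sum_j c_j v_j$ is orthogonal to every $v_i$, which, since $\sum_j c_j v_j$ itself lies in the span of the $v_i$, happens if and only if $\sum_j c_j v_j = 0$. Therefore $M$ is one-to-one if and only if the only vanishing linear combination of $\{v_j : j\in B^c\}$ is the trivial one, i.e. if and only if $\{(I-P)e_j : j\in B^c\}$ is linearly independent. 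Combining this with the previous paragraph yields the corollary.

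The argument is essentially routine; the one point that requires care — and the reason finite dimensionality genuinely enters — is this last equivalence: injectivity of the Gram operator on $\ell^2(B^c)$ coincides with linear independence of the family only because $B^c$ is finite (in general, an $\ell^2$ null sequence for the Gram operator need not be finitely supported, so the infinite dimensional statement must be phrased, as in the proposition, in terms of the operator being one-to-one rather than in terms of linear independence). Once $S$ is known to be finite this subtlety disappears and the Gram-matrix criterion applies verbatim.
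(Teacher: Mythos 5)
Your proof is correct and is exactly the argument the paper intends: the corollary is stated without proof as an immediate consequence of the preceding proposition, and you supply the standard details (finiteness of $S$, density equals equality of spans in finite dimensions, and the identification of the kernel of the Gram matrix with linear dependencies among $\{(I-P)e_j\}_{j\in B^c}$) correctly. Your closing remark about why finite dimensionality is genuinely needed for the Gram-matrix equivalence is accurate and a nice touch.
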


For our next result, we will be assuming that the Anderson Paving Problem has an affirmative answer. The Anderson Paving Problem and the Kadison-Singer Problem are known to be equivalent \cite{An}.
There are several equivalent versions of Anderson's Paving Problem. The particular version that we shall use asserts the following:

 For each $0< s <1,$ there exists an $r$ depending only on $s,$ such that if $H=(h_{i,j}) \in B(\ell^2(\bn))$ is any operator with $h_{i,i} =0,$ for every $i,$ then there exists a partition of $\bn$ into $r$ disjoint sets $A_1 \cup \cdots \cup A_r = \bn,$ with $\|D_{A_k}HD_{A_k}\| \le s \|H\|,$ for $k=1,...,r.$

\begin{theorem} Let $0 < \delta < 1.$ If the Anderson Paving Problem has a positive answer, then there exists an $r$ depending on $\delta,$ such that whenever  $\{ f_n \}_{n \in \bn}$ is a Parseval frame for a Hilbert space $\HH,$ with $\|f_n\|^2 \le 1 - \delta $
for all $n \in \bn$,
 then there exists  a partition of $\bn$ into $r$ disjoint sets, $A_1 \cup \cdots \cup A_r = \bn,$ such that $\HH_{A_k^c} = \HH,$ for $k=1,..., r.$
\end{theorem}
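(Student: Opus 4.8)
The plan is to reduce the spanning assertion to an operator-norm estimate via Proposition~\ref{2.2}, and then to obtain that estimate by applying the Anderson Paving Problem to the off-diagonal part of the Gram matrix of $\{f_n\}_{n\in\bn}$.

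First I would recall that, writing $G = (\langle f_n, f_m\rangle)_{m,n\in\bn}$ for the Gram matrix, Proposition~\ref{2.2} (applied with $B = A_k^c$, so that $B^c = A_k$) says that $\HH_{A_k^c} = \HH$ exactly when $1$ is not an eigenvalue of $D_{A_k} G D_{A_k}$. Consequently it suffices to produce a partition $\bn = A_1 \cup \cdots \cup A_r$ with $\|D_{A_k} G D_{A_k}\| < 1$ for each $k$: then $1$ does not even lie in the spectrum of $D_{A_k} G D_{A_k}$, and the conclusion follows.

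The next step is to split $G$ into its diagonal and off-diagonal parts. Let $D$ be the diagonal operator with entries $d_{n,n} = \langle f_n, f_n\rangle = \|f_n\|^2$ and set $H = G - D$, so that $H$ has zero diagonal. Since $\{f_n\}$ is a Parseval frame, $G$ is the matrix of a projection, so $0 \le G \le I$; and $0 \le D \le (1-\delta)I$ by hypothesis. Hence $-(1-\delta)I \le H \le I$, which gives $\|H\| \le 1$, while for any $B \subseteq \bn$ one has $\|D_B D D_B\| = \sup_{n\in B}\|f_n\|^2 \le 1-\delta$. I would then apply the Anderson Paving Problem to $H$ with $s = \delta/2 \in (0,1)$, obtaining an $r$ that depends only on $\delta$ and a partition $\bn = A_1 \cup \cdots \cup A_r$ with $\|D_{A_k} H D_{A_k}\| \le s\|H\| \le \delta/2$ for every $k$. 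The triangle inequality now yields
\[ \|D_{A_k} G D_{A_k}\| \le \|D_{A_k} D D_{A_k}\| + \|D_{A_k} H D_{A_k}\| \le (1-\delta) + \delta/2 = 1 - \delta/2 < 1, \]
so $\HH_{A_k^c} = \HH$ for $k = 1, \dots, r$, as desired.

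Once the pieces are assembled the argument is short, so the ``main obstacle'' is really the right setup rather than a hard estimate: the Anderson Paving Problem applies only to operators with vanishing diagonal, so one must first peel the diagonal off $G$, and it is precisely the uniform gap $\delta$ between $\|f_n\|^2$ and $1$ that leaves room to absorb the paved off-diagonal piece $D_{A_k} H D_{A_k}$ while keeping the total norm strictly below $1$. Choosing the paving parameter $s$ as a fixed fraction of $\delta$ is what forces $r$ to depend on $\delta$ alone, which is the point of the theorem.
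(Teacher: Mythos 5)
Your proposal is correct and follows essentially the same route as the paper: peel off the diagonal of the Gram matrix, pave the zero-diagonal remainder with $s=\delta/2$, bound $\|D_{A_k}GD_{A_k}\|$ by $1-\delta/2$, and invoke Proposition~\ref{2.2}. The only cosmetic difference is that you use the triangle inequality where the paper adds the two operator inequalities directly; both are fine.
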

\begin{proof}  Let $G= ( \langle f_j, f_i \rangle)$ and let $E(G)$ denote the diagonal part of $G,$ so that $0 \le E(G) \le (1 - \delta) I.$  Since $0 \le G \le I,$ we have that $(\delta -1)I \le -E(G) \le G - E(G) \le G \le I.$  Hence, $H = G - E(G)$ has 0 diagonal and $\|H\| \le 1.$  

Set $s = \delta/2$ in the statement of Anderson's Paving and let $r$ be the corresponding integer. Then we may pick disjoint sets, $A_1 \cup \cdots A_r = \bn$ such that $\|D_{A_k}HD_{A_k}\| \le \delta/2,$ for every $k \in \{1,2, \dots,r\}.$

Hence, we have that $0 \le D_{A_k}GD_{A_k} = D_{A_k}E(G)D_{A_k} + D_{A_k}HD_{A_k} \le (1 - \delta)I + (\delta/2)I = (1 - \delta/2)I.$ Thus, $\|D_{A_k}GD_{A_k}\| < 1$ and it follows that $1$ can not be an eigenvalue. From the preceding proposition, it follows that $\HH_{A_k^c}= \HH,$ for each $k \in \{1,2, \dots,r\}.$
\end{proof}

%\begin{remark}
The study of Parseval frames with norms bounded away from $1$ is in some sense complementary to other results relating Parseval frames and the Kadison-Singer Problem, e.g.~\cite{CCLV}, since most other work relating these problems focuses on Parseval frames whose norms are bounded away from $0$ rather than $1$ and focuses on linear independence rather than spanning.  However, having Parseval frames that are norm-bounded away from $1$ has the added advantage that when one projects onto a subspace, then the projections of these vectors is a Parseval frame for the subspace that is bounded away from $1$ by the same bound. In contrast, a Parseval frame with norms that are bounded away from $0$ might no longer have norms bounded away from $0$ when one projects it onto a subspace. 
%\end{remark}

%%%%%%%%%%%%%%%%%%%%%%%%%%%%%%%%%%%%%%%%%%%%%%%%%%%%%%%%%%%%%%%%%%%%%%%%%%%%%

\section{Spanning properties for partitions of Parseval frames with norms bounded away from one}

The previous section illustrates that Anderson's paving would provide a partition of certain norm-bounded 
Parseval frames $\{f_i\}_{i\in I}$
into a number of sets with specific spanning properties. Now we show that
the existence of such a number, $r$,  can be obtained independently of
the assumption of Anderson Paving, if the Hilbert space is finite dimensional. 
Moreover, our choice of $r$ depends only on the norm bound $1-\delta,$ and not 
the dimension of the space, and an explicit formula for $r$ as a function of $\delta$ is provided.

%nontrivial partitions of frames $\{f_i\}_{i\in I}$ for which 
%%$\|f_i\|^2 \le \frac{1}{2}$ .  Now we consider the case where 
%$\|f_i\|^2 \le 1-\delta$ with some
%$0<\delta <1$ for all $i \in i$.
%We will now show that this is true in finite dimensions, with a size of the partition which depends only on $\delta$.
  %So this will also cover the case where $\|f_i\|^2 \ge \frac{1}{2}$. 

Recall that a {\emph {matroid}} is a finite set $X$ together with a collection of subsets of $X$, $\calI$, which satisfy three properties:

\begin{enumerate}
\item $\emptyset\in \calI$
\item if $I_1\in \calI$ and $I_2\subset I_1$, then $I_1\in \calI$, and 
\item if $I_1,I_2\in \calI$ and $|I_1| < |I_2|$, then there exists $x\in I_2 \setminus I_1$ such that $I_1 \cup\{x\} \in \calI$.
\end{enumerate}

We will say that elements of $\calI$ are independent.  We also recall that the rank of a set $E\subset X$ is defined to be the cardinality of a maximal independent set contained in $E$.  

Now, given a set of vectors $\{f_j:j\in J\}$ which spans $\mathbb H_N$, we say $J\in \calJ$ if $\{f_j:j\not\in J\}$ spans $\mathbb H_N$. It is straightforward to verify that $(J, \calJ)$ forms a matroid.  Indeed, the first two properties are immediate and the third property reduces after taking complements to the fact that if $\{f_j : j\in E_1\}$ and $\{f_j:j\in E_2\}$ both span $\mathbb H_N$, and $|E_1| > |E_2|$, then there exists $x\in E_1\setminus E_2$ such that $\{f_j:j\in E_1, j\not= x\}$ spans.

We note here that for a natural number $n$,  rank$(E) \ge n$ if and only if there is a set $F\subset E$ such that $|F| = n$ and $\{f_j:j\not \in F\}$ spans $\mathbb H_N$.  

Finally, we recall the Rado-Horn Theorem \cite{H, R} in the context of matroids.

\begin{theorem}\cite{EF}  Let $(X, \calI)$ be a matroid, and let $R$ be a positive integer.  A set $J\subset X$ can be partitioned into $R$ independent sets if and only if for every subset $E\subset J$,
\begin{equation}\label{eq1}
\frac {|E|}{\rank(E)} \le R.
\end{equation}

\end{theorem}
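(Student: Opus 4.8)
The plan is to prove the two implications separately; the forward direction is immediate and the converse carries all the content.

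\textbf{Necessity.} Suppose $J = J_1 \cup \cdots \cup J_R$ is a partition into members of $\calI$. Given $E \subseteq J$, the sets $E \cap J_i$ are pairwise disjoint, cover $E$, and each is independent by the hereditary property of matroids; hence each is an independent subset of $E$ and so has cardinality at most $\rank(E)$. Summing over $i = 1, \dots, R$ gives $|E| = \sum_i |E \cap J_i| \le R\,\rank(E)$, which is \eqref{eq1}.

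\textbf{Sufficiency.} Now assume \eqref{eq1} holds for every $E \subseteq J$. Among all $R$-tuples $(I_1, \dots, I_R)$ of pairwise disjoint members of $\calI$ contained in $J$, I would choose one maximizing $|I_1 \cup \cdots \cup I_R|$ and set $S = I_1 \cup \cdots \cup I_R$. If $S = J$ then $(I_1, \dots, I_R)$ is the required partition and we are done, so suppose toward a contradiction that some $x \in J \setminus S$ exists. The key device is the exchange digraph $D$ on vertex set $S \cup \{x\}$: for each color $j$ and each $v \notin I_j$ for which $I_j \cup \{v\}$ is dependent, draw an arc $v \to u$ for every $u \ne v$ on the unique circuit contained in $I_j \cup \{v\}$; such a $u$ satisfies $I_j \cup \{v\} \setminus \{u\} \in \calI$. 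Two standard matroid-union lemmas then close the argument. First, if some vertex $z$ reachable from $x$ in $D$ can be freely adjoined to its color, i.e. $I_j \cup \{z\} \in \calI$ for some $j$, then performing the chain of single-element exchanges along a \emph{shortest} $x$--$z$ dipath produces pairwise disjoint independent sets whose union is $S \cup \{x\}$, contradicting maximality of $S$; the shortest-path hypothesis is exactly what makes the simultaneous exchange consistent and keeps every color class independent. Second, if no such $z$ exists, let $A$ be the set of all vertices reachable from $x$, including $x$ itself. For each $j$, every element of $A \setminus I_j$ is then dependent on $I_j$, and since the relevant fundamental circuit consists of out-neighbors in $D$ it lies inside $A$; thus every element of $A$ is spanned by $I_j \cap A$, so $\rank(A) = |I_j \cap A|$ for every $j$. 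Because $A$ is the disjoint union of $\{x\}$ and the sets $I_j \cap A$, this gives $|A| = 1 + R\,\rank(A) > R\,\rank(A)$, contradicting \eqref{eq1} applied to $E = A \subseteq J$. Either branch yields a contradiction, so $S = J$.

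\textbf{Alternative route and main obstacle.} An equivalent, more structural path is to invoke the matroid union theorem: the $R$-fold union of the matroid $(X,\calI)$ with itself is a matroid whose independent sets are precisely the unions of $R$ members of $\calI$ and whose rank function is $E \mapsto \min_{A \subseteq E}(|E \setminus A| + R\,\rank(A))$; then $J$ splits into $R$ independent sets iff $J$ is independent in that union iff $\min_{A \subseteq J}(|J \setminus A| + R\,\rank(A)) = |J|$, which rearranges to \eqref{eq1}. This is slicker to state but merely defers the work, since the matroid union theorem is itself proved by the augmenting-path argument above. The hard part is precisely that argument: verifying that a shortest dipath in $D$ induces a valid simultaneous exchange enlarging $S$, and that in the absence of an augmenting path the reachable set $A$ really does violate the counting inequality. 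The necessity direction and the passage between the partition statement and the rank inequality are routine bookkeeping.
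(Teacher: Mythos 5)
The paper does not actually prove this theorem: it is quoted verbatim from Edmonds--Fulkerson \cite{EF} and used as a black box, so there is no internal proof to compare against. What you have written is, in substance, the proof from that cited source -- the standard augmenting-path argument for the matroid partition theorem. Your necessity direction is complete and correct ($E\cap J_i$ is an independent subset of $E$, hence has size at most $\rank(E)$, and one sums over $i$). Your sufficiency direction has the right architecture: maximize $|I_1\cup\cdots\cup I_R|$ over disjoint independent subsets, build the exchange digraph from fundamental circuits, augment along a shortest dipath if some reachable element can be freely adjoined, and otherwise show that the reachable set $A$ satisfies $|A|=1+R\,\rank(A)$, violating (\ref{eq1}); the counting in that last step is correct, since $A$ is the disjoint union of $\{x\}$ with the sets $I_j\cap A$ and each $I_j\cap A$ spans $A$.

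The one point where your write-up falls short of a complete proof is the claim that a \emph{shortest} augmenting dipath yields a valid simultaneous exchange, i.e.\ that after removing and inserting the prescribed elements every color class is still independent. You assert this ("the shortest-path hypothesis is exactly what makes the simultaneous exchange consistent") and correctly identify it as the crux, but you do not prove it. It is a true and standard lemma (if $y_1,\dots,y_t\in I$, $x_1,\dots,x_t\notin I$, $y_i$ lies in the fundamental circuit $C(x_i,I)$, and $y_i\notin C(x_h,I)$ for $h<i$, then $(I\setminus\{y_1,\dots,y_t\})\cup\{x_1,\dots,x_t\}$ is independent, the minimality of the path supplying the "no shortcut" hypothesis), so this is an omission of a known argument rather than an error; a self-contained proof would need that lemma spelled out. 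Your alternative route via the matroid union theorem is also correct but, as you note yourself, only relocates the same work. Given that the authors deliberately outsource this theorem to \cite{EF}, citing it is the appropriate move in context, and your sketch confirms you know where the real content lies.
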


\begin{theorem} \label{T1} 
Let $\delta > 0$.  Suppose that $\{f_j:j\in J\}$ is a Parseval frame for $\mathbb H_N$ with $\|f_j\|^2 \le 1 - \delta$ for all $j\in J$.  Let $R \in \bn$, $R \ge \frac1\delta$.  Then, it is possible to partition $J$ into $R$ sets $\{A_1,\ldots, A_R\}$ such that for each $1\le r\le R$, the family $\{f_j:j\not\in A_r\}$ spans $\mathbb H_N$.  
\end{theorem}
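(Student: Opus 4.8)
The plan is to apply the matroid version of the Rado--Horn Theorem quoted above to the matroid $(J,\calJ)$ introduced just before the statement, in which $F\subseteq J$ is independent precisely when $\{f_j:j\notin F\}$ spans $\mathbb H_N$. Since $R\ge 1/\delta$, it suffices to establish the rank estimate
\[
\rank(E) \ge \delta\,|E| \qquad\text{for every } E\subseteq J,
\]
for then $|E|/\rank(E)\le 1/\delta\le R$, and the desired partition $A_1,\dots,A_R$ of $J$ into independent sets --- equivalently, into sets whose complements span --- is exactly what criterion \eqref{eq1} of the Rado--Horn Theorem delivers.

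To get at $\rank(E)$ I would work with the Gram matrix $G=(\langle f_j,f_i\rangle)$, which is a projection on $\ell^2(J)$ by Proposition \ref{2.1}. By Proposition \ref{2.2}, for $F\subseteq J$ the family $\{f_j:j\notin F\}$ spans $\mathbb H_N$ if and only if $1$ is not an eigenvalue of $D_FGD_F$. A one-line computation --- if $D_FGD_Fv=v$ then $v$ is supported on $F$ and $\langle Gv,v\rangle=\|v\|^2$, which forces $Gv=v$ because $G$ is a projection --- identifies the eigenvalue-$1$ eigenspace of $D_FGD_F$ with $\operatorname{ran}G\cap\ell^2(F)$. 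So $F$ is independent in the matroid if and only if $\operatorname{ran}G\cap\ell^2(F)=\{0\}$.

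Now fix $E\subseteq J$ and set $k=\dim\bigl(\operatorname{ran}G\cap\ell^2(E)\bigr)$. Deleting from $E$, one index at a time, a coordinate at which some nonzero vector of the current ``obstruction space'' $\operatorname{ran}G\cap\ell^2(F)$ is supported lowers the dimension of that space by exactly one at each step (it passes to the kernel of a nonzero coordinate functional), so after $k$ deletions we reach $F\subseteq E$ with $|F|=|E|-k$ and $\operatorname{ran}G\cap\ell^2(F)=\{0\}$; hence $F$ is independent and $\rank(E)\ge|E|-k$. It remains to bound $k$. The operator $D_EGD_E$ is positive with $\|D_EGD_E\|\le1$ and has eigenvalue $1$ with multiplicity $k$, so $\tr(D_EGD_E)\ge k$, while $\tr(D_EGD_E)=\sum_{j\in E}\|f_j\|^2\le(1-\delta)|E|$. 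Therefore $k\le(1-\delta)|E|$ and $\rank(E)\ge|E|-k\ge\delta|E|$, as wanted. (When $E=\emptyset$ criterion \eqref{eq1} is vacuous, and for nonempty $E$ the estimate gives $\rank(E)\ge1$, so the ratio is well defined.)

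I expect the crux to be this rank estimate, and within it the two structural facts above: that spanning of $\{f_j:j\notin F\}$ is equivalent to the vanishing of $\operatorname{ran}G\cap\ell^2(F)$ --- where Propositions \ref{2.1} and \ref{2.2} together with the projection property of $G$ are essential --- and that the dimension of this obstruction space is at most $(1-\delta)|E|$, which is the one place the norm bound $\|f_j\|^2\le1-\delta$ enters, via the trace versus operator-norm comparison. By contrast the reduction to Rado--Horn and the dimension-counting deletion step should be routine; the main thing to watch is that the matroid axioms and the rank description recorded in the text are invoked exactly as stated there.
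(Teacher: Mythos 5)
Your proof is correct, and it follows the paper's overall strategy exactly: the same matroid $(J,\calJ)$, the same reduction via the Rado--Horn criterion \eqref{eq1}, and the same target estimate $\rank(E)\ge\delta|E|$. Where you differ is in how that rank estimate is established, and the difference is worth recording. The paper argues directly in $\HH_N$: it sets $S=\spa\{f_j:j\notin E\}$, projects onto $S^\perp$, uses the Parseval identity to get $\dim S^\perp=\sum_{j\in E}\|Pf_j\|^2\le(1-\delta)|E|$, then extracts $E_1\subset E$ with $|E_1|=M=\lfloor(1-\delta)|E|\rfloor$ whose projections span $S^\perp$ and checks by an explicit vector computation that $E_2=E\setminus E_1$ is independent in the matroid. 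You instead work entirely in $\ell^2(J)$ with the Gram projection $G$, invoking Propositions \ref{2.1} and \ref{2.2} (which the paper's Section~3 proof never uses) to identify matroid independence of $F$ with the vanishing of the obstruction space $\operatorname{ran}G\cap\ell^2(F)$, and then run a clean coordinate-deletion argument plus a trace bound. Your steps all check out: the identification of the eigenvalue-one eigenspace of $D_FGD_F$ with $\operatorname{ran}G\cap\ell^2(F)$ is right (equality in $\|Gv\|\le\|v\|$ forces $Gv=v$ for a projection), each deletion drops the obstruction dimension by exactly one since you pass to the kernel of a nonzero coordinate functional, and $\tr(D_EGD_E)=\sum_{j\in E}\|f_j\|^2$ is where the norm hypothesis enters. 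In fact the two arguments are computing the same number: since $G=VV^*$ with $V$ the analysis operator, a vector of $\operatorname{ran}G$ supported on $E$ is $Vx$ for some $x\perp S$, and $V$ is an isometry, so your $k$ equals the paper's $\dim S^\perp$, and both proofs bound it by the identical trace computation. What your route buys is a tighter integration with the Section~2 machinery and a deletion step that avoids the paper's slightly fiddly verification that $E_2$ is independent; what the paper's route buys is a self-contained, coordinate-free argument that does not depend on the operator-theoretic propositions.
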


\begin{proof}  Let $\calJ = \{E\subset J: \spa \{f_j:j\not\in E\} = \mathbb H_N \}$.  Since a Parseval frame must span, we have that $(J, \calJ)$ is a matroid.  By the Rado-Horn Theorem, it suffices to show (\ref{eq1}) for each subset of $J$.  Let $E\subset J$.  Define $S = \spa\{f_j:j\not\in E\}$, and let $P$ be the orthogonal projection onto $S^\perp$.   Since the orthogonal projection of a Parseval frame is again a Parseval frame, we have that $\{Pf_j:j\in J\}$ is a Parseval frame for $S^\perp$.  Moreover, we have
\begin{eqnarray*}
\dim S^\perp &=& \sum_{j\in J} \|Pf_j\|^2 = \sum_{j\in E} \|Pf_j\|^2 \\
&\le& |E|(1 - \delta).
\end{eqnarray*}
Let $M$ be the largest integer smaller than or equal to $|E|(1 - \delta)$.  Since $\dim S^\perp \le M$, we have that there exists a set $E_1 \subset E$ such that $|E_1| = M$ and $\spa \{P f_j:j\in E_1\} = S^\perp$.  Let $E_2 = E\setminus E_1$.  We show $E_2$ is independent.

Write $h \in \mathbb H_N$ as $h = h_1 + h_2$, where $h_1\in S$ and $h_2 \in S^\perp$.   We have that $h_2 = \sum_{j\in E_1} \alpha_j Pf_j$ for some choice of $\{\alpha_j:j\in E_1\}$.  Write $\sum_{j\in E_1} \alpha_j f_j = g_1 + h_2$, where $g_1 \in S$.  Then, there exist $\{\alpha_j:j\not\in E\}$ such that $\sum_{j\not\in E} \alpha_j f_j = h_1 - g_1$.  Then, we have
\[
\sum_{j\not\in E_2} \alpha_j f_j = h,
\]
as desired.

Now, since $E$ contains an independent set of cardinality $|E| - M$, it follows that $\rank(E) \ge |E| - M \ge |E| - |E|(1 - \delta) = \delta |E|$.  Therefore, 
\[
\frac {|E|}{\rank(E)} \le \frac 1\delta \le R,
\]
as desired.  
\end{proof}

We note that it is not possible, in general, to get the partition in Theorem \ref{T1} to
have the property that the $\{f_j\}_{j\in A_i}$ are linearly independent.  Again, the
problem is that we do not have a lower bound on the norms of the frame
vectors and so there can be an arbitrarily large number of them.  That is, there can
be too many frame vectors to be able to partition them into $R$ linearly independent
sets. However, we will see that it is possible to achieve a partition in which
all sets but one are linearly independent and spanning, if the norms of the vectors
are uniformly small.

%%%%%%%%%%%%%%%%%%%%%%%%%%%%%%%%%%%%%%%%%%%%%%%%%%%%%%%%%%%%%%%%%%%%%%%%%%%%%%%
\section{Spanning and linear independence properties for Parseval frames with uniformly small norms}

In this section we obtain a strengthening of the preceding section with the help of
a generalization of the Rado-Horn Theorem due to Casazza,
Kutyniok and Speegle \cite{CKS}.

\begin{theorem}\label{T2}
Let $\{f_i\}_{i\in I}$ be a finite collection of vectors in a vector space $X$ and let $M\in 
{\mathbb N}$.  The following conditions are equivalent:

(1)  There exists a partition $\{I_j\}_{j=1}^M$ of $I$ so that for each $j$, $\{f_i\}_{i\in I_j}$
is linearly independent.

(2)  For all $J\subset I$,
\[ \frac{|J|}{dim\ span\ \{f_i\}_{i\in J}} \le M.\]

Moreover, in the case that the above conditions fail, there exists a partition 
$\{I_j\}_{j=1}^M$ of $I$ and a subspace $S$ of $X$ such that the following three
conditions hold.
\vskip12pt
\ \ \ \ (a)  For all $1\le j\le M$, $S= span\ \{f_i:i\in I_j,\ \mbox{and}\ f_i\in S\}$.
\vskip12pt
\ \ \ \ (b)  For $J = \{i\in I:f_i\in S\}$, 
\[ \frac{|J|}{dim\ span\ \{f_i\}_{i\in J}}>M.\]
\ \ \ \ \ \ \  (c)  For each $1\le j\le M$, 
\[ \sum_{i\in I_j,f_i\notin S}\alpha_if_i =0,\ \ \mbox{implies}\ \ \alpha_i=0,\ \ 
\mbox{for all i}.\]
In particular, for each $1\le j \le M$, $\{f_i:i\in I_j,\ f_i\notin S\}$ is linearly independent.
\end{theorem}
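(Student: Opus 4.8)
The plan is to prove the equivalence $(1)\Leftrightarrow(2)$ and the structural ``Moreover'' clause together, by a single extremal argument over partitions of $I$ (this recovers, in streamlined form, the result of \cite{CKS}). Only the implication $(1)\Rightarrow(2)$ is elementary, and I would dispose of it first: if $I=\bigsqcup_{j=1}^M I_j$ with each $\{f_i\}_{i\in I_j}$ linearly independent, then for every $J\subseteq I$,
\[ |J| \;=\; \sum_{j=1}^M |I_j\cap J| \;=\; \sum_{j=1}^M \dim\span\{f_i : i\in I_j\cap J\} \;\le\; M\,\dim\span\{f_i\}_{i\in J}, \]
which is $(2)$. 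Observe next that the structural clause already \emph{contains} $\neg(2)$ as its part $(b)$; hence once we have shown that $\neg(1)$ implies the existence of a partition and a subspace $S$ with properties $(a)$--$(c)$, we obtain simultaneously $\neg(1)\Rightarrow\neg(2)$ (that is, $(2)\Rightarrow(1)$) and the ``Moreover'' assertion. So everything reduces to: \emph{if $I$ cannot be partitioned into $M$ linearly independent sets, produce a partition $\{I_j\}_{j=1}^M$ and a subspace $S$ satisfying $(a)$, $(b)$, $(c)$.}

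For this, write $V_j := \span\{f_i : i\in I_j\}$ and call $\Delta(\{I_j\}) := \sum_{j=1}^M\bigl(|I_j| - \dim V_j\bigr) \ge 0$ the \emph{defect} of a partition; it vanishes precisely when every block is linearly independent. I would fix a partition minimizing $\Delta$, and among those minimizing $\sum_{j=1}^M \dim V_j$ (a further lexicographic tie-break may be required; see below). If $\Delta = 0$ we are in case $(1)$, so assume $\Delta > 0$. The engine is an \emph{exchange lemma}: for such an extremal partition, if $i\in I_j$ and $f_i\in\span\{f_k : k\in I_j\setminus\{i\}\}$, then $f_i\in V_{j'}$ for every $j'\ne j$. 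Indeed, moving $i$ from $I_j$ to $I_{j'}$ lowers the block-$j$ summand $|I_j|-\dim V_j$ by $1$ (deleting a superfluous vector shrinks the count but not the span) while raising the block-$j'$ summand by $1$ unless $f_i\in V_{j'}$; minimality of $\Delta$ forbids a net decrease, forcing $f_i\in V_{j'}$. Since $\Delta>0$, at least one such ``superfluous'' vector exists, and the lemma places every superfluous vector in $\bigcap_j V_j$.

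The subspace $S$ must be built from these superfluous vectors, but $\bigcap_j V_j$ is in general too large for $(a)$ — it fails, for instance, if all superfluous vectors happen to sit in a single block. Instead I would iterate: put $S_0 = \{0\}$, and given $S_k$, apply the extremal/exchange analysis to the images in $X/S_k$ of the vectors $\{f_i : f_i\notin S_k\}$, absorbing into $S_{k+1}$ every $f_i$ that becomes superfluous modulo $S_k$ within its block; set $S=\bigcup_k S_k$, a process that terminates since $I$ is finite. Property $(a)$ then holds because stabilization of the recursion means exactly that in each block the vectors already lying in $S$ span $S$. Property $(c)$ holds because a nontrivial relation $\sum_{i\in I_j,\ f_i\notin S}\alpha_i f_i = 0$ would exhibit a vector that is superfluous modulo $S$ in block $j$, contradicting stabilization; in fact the construction yields the stronger statement that $\{f_i : i\in I_j,\ f_i\notin S\}$ is independent \emph{modulo} $S$. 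Finally $(b)$ follows by looking at $J = \{i : f_i\in S\}$: property $(a)$ gives $\span\{f_i\}_{i\in J}=S$ and $\dim\span\{f_i\}_{i\in I_j\cap J} = \dim S$, the mod-$S$ independence gives $\dim V_j = \dim S + |I_j\setminus J|$, and therefore
\[ 0 \;<\; \Delta \;=\; \sum_{j=1}^M\bigl(|I_j| - \dim V_j\bigr) \;=\; \sum_{j=1}^M\bigl(|I_j\cap J| - \dim S\bigr) \;=\; |J| - M\dim S. \]

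The hard part is exactly the step just sketched in broad strokes: passing from the \emph{local} content of the exchange lemma (one superfluous vector is forced into every other block's span) to a \emph{globally} consistent subspace $S$ satisfying $(a)$. This is what makes Rado--Horn ``rather delicate'': one must choose the secondary (and possibly tertiary) extremal conditions on the partition correctly and organize the recursion so that $(a)$ and $(c)$ hold for one and the same partition; this bookkeeping is what is carried out in \cite{CKS}, and it is closely related to identifying the ``dense'' part of the linear matroid of $\{f_i\}$ in the sense of the principal-partition theory of matroids. A minor additional point is the treatment of zero vectors (which should be absorbed into $S$ at the outset) and of the degenerate case $M=1$.
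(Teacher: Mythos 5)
The paper itself does not prove this theorem: it is quoted from \cite{CKS}, and the closest thing to a proof inside the paper is the chain machinery of Lemmas~\ref{keylemma}--\ref{claim1}, developed to prove Theorem~\ref{cor5}. Measured against that, the outer architecture of your proposal is right and several pieces are correct: the implication $(1)\Rightarrow(2)$, the reduction of $(2)\Rightarrow(1)$ to the structural clause via $(b)$, the one-step exchange lemma for a partition maximizing $\sum_j\dim V_j$ (this is exactly Lemma~\ref{keylemma}), and the count deriving $(b)$ from $(a)$ together with the mod-$S$ form of $(c)$. (One small slip: since $\Delta=|I|-\sum_j\dim V_j$, ``minimize $\Delta$, and among those minimize $\sum_j\dim V_j$'' is vacuous; the tie-breaks actually needed are of the Pareto type \M\ used in the paper, together with first arranging blocks $2,\dots,M$ to be linearly independent.)

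The genuine gap is at the step you yourself flag as ``sketched in broad strokes,'' and the justification you offer for $(a)$ is incorrect as stated. Stabilization of your recursion --- no $f_i\notin S$ is superfluous modulo $S$ within its block --- is precisely condition $(c)$ in its mod-$S$ form; it says nothing about whether the vectors of each block that lie in $S$ span $S$, which is what $(a)$ asserts. To obtain $(a)$ you must show that every absorbed vector lies in the span of the absorbed vectors of \emph{every} block, and for vectors absorbed after the first stage this does not follow from the one-step exchange lemma: such a vector is dependent only modulo $S_k$, so removing it from its block strictly decreases that block's span, and extremality of the partition then imposes no constraint on whether it lies in the other blocks' spans. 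The repair is a multi-step exchange --- first move a genuinely superfluous vector into the target block (cost-free by the exchange lemma), after which the next vector becomes genuinely superfluous in the modified, still extremal, partition --- but these moves interfere with one another (two copies of one vector in a block are each superfluous, yet after one is moved the other no longer is), which is exactly why \cite{CKS}, and Lemmas~\ref{chainlem}--\ref{claim1} here, need minimal chains and the evolving sets $U^i_k$ to keep the induction consistent. Until that bookkeeping is supplied, $(a)$ and the mod-$S$ strengthening of $(c)$ --- both of which your derivation of $(b)$ uses --- are unproved, so the central implication $\neg(1)\Rightarrow$ (structural clause) is not established.
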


We also need a slight generalization of  a result of Casazza and Tremain \cite{CT}.

\begin{proposition}\label{P5}
Let $r,k,N$ be  natural numbers with $0< k <N$ and let $\{f_i\}_{i=1}^{rN+k}$ be an 
equal norm Parseval frame for an $N$-dimensional Hilbert space $\mathbb H_N$.
Then $\{f_i\}_{i=1}^{rN+k}$ can be partitioned into $r+1$ linearly independent  sets.
If $k=0$, $\{f_i\}_{i=1}^{rN}$ can be partitioned into $r$ linearly independent spanning
sets.
\end{proposition}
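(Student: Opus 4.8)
The plan is to deduce both statements from Theorem~\ref{T2}. First note that since $\{f_i\}_{i=1}^{rN+k}$ is an equal-norm Parseval frame for $\HH_N$, the numbers $\|f_i\|^2$ are all equal and sum to $\tr I_{\HH_N}=N$, so $\|f_i\|^2 = N/(rN+k)$ for every $i$.

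Next I would verify condition~(2) of Theorem~\ref{T2}. Fix $J\subseteq\{1,\dots,rN+k\}$, set $S_J=\span\{f_i:i\in J\}$, let $d_J=\dim S_J$, and let $P_J$ be the orthogonal projection of $\HH_N$ onto $S_J$. As recalled at the start of Section~2, $\{P_Jf_i\}_{i=1}^{rN+k}$ is a Parseval frame for $S_J$, and, exactly as in the proof of Theorem~\ref{T1},
\[
d_J \;=\; \sum_{i=1}^{rN+k}\|P_Jf_i\|^2 \;\ge\; \sum_{i\in J}\|P_Jf_i\|^2 \;=\; \sum_{i\in J}\|f_i\|^2 \;=\; \frac{|J|\,N}{rN+k},
\]
where the last inner equality holds because $f_i\in S_J$ for $i\in J$. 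Hence $|J|\le d_J\,(rN+k)/N = d_J\,(r+k/N)$. When $0<k<N$ this yields $|J|/d_J\le r+1$ (with $|J|=0$ forced if $d_J=0$), so condition~(2) holds with $M=r+1$ and Theorem~\ref{T2} gives a partition of $\{1,\dots,rN+k\}$ into $r+1$ linearly independent sets. When $k=0$ the same estimate gives $|J|/d_J\le r$, so Theorem~\ref{T2} produces a partition $\{I_1,\dots,I_r\}$ into $r$ linearly independent sets.

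For the case $k=0$ it then remains to see that each $\{f_i:i\in I_j\}$ also spans. Each $I_j$ satisfies $|I_j|\le N$ because its vectors are linearly independent in the $N$-dimensional space $\HH_N$, while $\sum_{j=1}^r|I_j|=rN$; therefore $|I_j|=N$ for every $j$, so $\{f_i:i\in I_j\}$ is a linearly independent family of $N$ vectors in $\HH_N$, hence a basis, and in particular spans $\HH_N$.

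I expect no serious obstacle here; the only points requiring care are the trace identity $\dim W=\sum_i\|g_i\|^2$ for a Parseval frame $\{g_i\}$ of $W$ together with the fact that orthogonal projection preserves the Parseval property (both already used in this paper), and keeping the strict inequality $k<N$ in play so that $r+k/N$ stays at or below $r+1$. One could equivalently argue by contradiction through the final clause of Theorem~\ref{T2}: a failure of~(1) would produce a subspace $S$ for which $J=\{i:f_i\in S\}$ has $|J|>(r+1)\dim S$, which the displayed trace estimate applied to $S$ rules out.
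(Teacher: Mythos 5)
Your proposal is correct and follows essentially the same route as the paper: compute $\|f_i\|^2=N/(rN+k)$ from the Parseval trace identity, project onto $\span\{f_i\}_{i\in J}$ to get $|J|/\dim\span\{f_i\}_{i\in J}\le (rN+k)/N$, invoke the Rado--Horn criterion with $M=r+1$ (resp.\ $r$), and for $k=0$ conclude spanning by the counting argument $|I_j|=N$. No issues.
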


\begin{proof}

Since $\{f_i\}_{i=1}^{rN+k}$ is an equal norm Parseval frame, we have
\[ N = \sum_{i=1}^{rN+k}\|f_i\|^2 = (rN+k)\|f_j\|^2,\ \ \mbox{for all $j=1,2,\ldots,rN+k$}.\]
That is,
\[ \|f_i\|^2 = \frac{N}{rN+k},\ \ \mbox{for all $i=1,2,\ldots,N$}.\]
We will verify that the assumption of the Rado-Horn Theorem holds for $r+1$.  Choose
$J\subset \{1,2,\ldots,rN+k\}$.  Let $P$ be the orthogonal projection of $\mathbb H_N$ onto
span $\{f_i\}_{i\in J}$.  Since $\{Pf_i\}_{i\in J}$ is a Parseval frame for its span we have
\[ \dim \span\ \{f_i\}_{i\in J} = \sum_{i=1}^{rN+k}\|Pf_i\|^2 \ge \sum_{i\in J}\|Pf_i\|^2
= \sum_{i\in J}\|f_i\|^2 = \frac{N|J|}{rN+k}.\]
That is,
\[ \frac{|J|}{\dim \span\ \{f_i\}_{i\in J}}\le \frac{rN+k}{N}.\]
That is,
\[ \frac{|J|}{\dim \span\ \{f_i\}_{i\in J}}\le  \begin{cases} r & \mbox{if $k=0$}\\
 r+1 & \mbox{if $0<k<N$}
 \end{cases} \]

The result now follows by the Rado-Horn Theorem and the fact that in the case $k=0$,
we have partitioned an $rN$ element set into $r$ linearly independent sets
in an $N$-dimensional Hilbert space $\mathbb H_N$, and hence,
each must contain exactly $N$ elements and so it must be a spanning set.  
\end{proof}

We now want to strengthen Proposition \ref{P5} to show that we can actually partition
our family of vectors into a linearly independent set and  $r$ linearly independent spanning sets.

\begin{lemma}\label{lem1}
Let $\{f_i\}_{i\in I_j}$, $j=1,2,\ldots r$ be linearly independent families of vectors in 
an $N$-dimensional Hilbert space $\mathbb H_N$.
Assume there is a partition of $\cup_{j=1}^r I_j$ into $\{A_j\}_{j=1}^r$ so that
\[ \span \ \{f_i\}_{i\in A_j} = \mathbb H_N,\ \ \mbox{for all $j=1,2,\ldots,r$}.\]
Then
\[ \span \  \{f_i\}_{i\in I_j} = \mathbb H_N,\ \ \mbox{for all $j=1,2,\ldots,r$}.\]
\end{lemma}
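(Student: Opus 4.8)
The plan is a short double-counting argument, built around the cardinality rigidity that the hypotheses secretly impose. Write $I=\bigcup_{j=1}^r I_j$; in the situation where the lemma is applied the $I_j$ are pairwise disjoint, so that both $\{I_j\}_{j=1}^r$ and $\{A_j\}_{j=1}^r$ are partitions of $I$. First I would exploit the spanning hypothesis on the $A_j$: since $\span\{f_i\}_{i\in A_j}=\mathbb H_N$ and $\dim\mathbb H_N=N$, each $A_j$ contains at least $N$ indices, so $|A_j|\ge N$. Summing over the partition $\{A_j\}$ gives
\[ |I|=\sum_{j=1}^r|A_j|\ge rN. \]

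Next I would exploit the independence hypothesis on the $I_j$: a linearly independent family in an $N$-dimensional space has at most $N$ elements, so $|I_j|\le N$ for each $j$. Summing over the partition $\{I_j\}$ gives
\[ |I|=\sum_{j=1}^r|I_j|\le rN. \]
Comparing the two displays forces $|I|=rN$ together with equality in every summand, so $|I_j|=N$ for all $j=1,\dots,r$.

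Finally, for each $j$ the family $\{f_i\}_{i\in I_j}$ is a linearly independent set of exactly $N$ vectors in the $N$-dimensional space $\mathbb H_N$, hence a basis, and in particular $\span\{f_i\}_{i\in I_j}=\mathbb H_N$, which is the desired conclusion. I do not anticipate a real obstacle: the whole content is the observation that the existence of a re-partition into $r$ spanning sets pins the total number of vectors to be exactly $rN$, after which the independence of each $I_j$ upgrades it from ``at most $N$ vectors'' to ``a basis.'' This also clarifies why, in the intended strengthening of Proposition \ref{P5}, one needs to begin from a partition into linearly independent sets rather than arbitrary ones.
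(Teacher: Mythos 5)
Your proof is correct and is essentially the same double-counting argument as in the paper: independence gives $|I_j|\le N$, spanning gives $|A_j|\ge N$, and since both families partition the same index set the counts are forced to be exactly $N$, making each $\{f_i\}_{i\in I_j}$ a basis. The paper phrases the count in terms of $\dim\span\{f_i\}_{i\in I_j}=|I_j|$, but this is the same computation.
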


\begin{proof}
For all $j=1,2,\ldots,r$,
the fact that $\{f_i\}_{i\in I_j}$ are linearly independent implies that the
 dimension of the span of $\{f_i\}_{i\in I_j} = |I_j|.$ Also, the fact that  $\{f_i\}_{i\in A_j}$
span $\HH_N$ implies $|A_j|\ge N$.  Now, we have
\[ Nr \ge \sum_{j=1}^r \dim \span\ \{f_i:i\in I_j\} = \sum_{j=1}^r |I_j| = 
|\cup_{j=1}^r I_j| = |\cup_{j=1}^r A_j| = \sum_{j=1}^r |A_j|  \ge Nr.\]
Hence,
\[ \sum_{j=1}^r \dim \span\ \{f_i:i\in I_j\} = Nr,\]
and so
\[ \dim \span\ \{f_i:i\in I_j\} =N,\ \ \mbox{for every $j=1,2,\ldots,r$}.\]
\end{proof}

Now we can partition frames into spanning sets.  

\begin{proposition}\label{P6}
Let $\{f_i\}_{i\in I}$ be a frame for $\mathbb H_N$ with lower
frame bound $A$ and $\|f_i\|^2 \le 1$ for all $i \in I$.  Let $r = \lfloor A \rfloor$.
  Then there exists a partition  $\{I_j\}_{j=1}^r$ of $I$ so that
\[ \span\ \{f_i:i\in I_j\} = \mathbb H_N,\ \ \mbox{for all $j=1,2,\ldots,r$}.\]
In particular, the number of frame vectors in a unit norm frame
with lower frame bound $A$ is greater than or equal to$\lfloor A\rfloor N$.
\end{proposition}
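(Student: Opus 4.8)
The plan is to recast the statement as a fact about the linear matroid of the frame. Let $\mathcal M$ denote the matroid on the ground set $I$ whose independent sets are the linearly independent subfamilies of $\{f_i\}_{i\in I}$ and whose rank function is $\rho(Y)=\dim\span\{f_i:i\in Y\}$; since a frame spans, $\rho(I)=N$. I would first prove the key combinatorial claim that $\mathcal M$ contains $r$ pairwise disjoint bases. Granting that, the proposition drops out: if $B_1,\dots,B_r$ are such bases, put $I_j=B_j$ for $1\le j\le r-1$ and $I_r=I\setminus(B_1\cup\cdots\cup B_{r-1})$. These sets partition $I$; the first $r-1$ span $\mathbb H_N$ by construction, and $I_r\supseteq B_r$ because $B_r$ is disjoint from $B_1,\dots,B_{r-1}$, so $\span\{f_i:i\in I_r\}=\mathbb H_N$ as well. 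The ``in particular'' clause is then immediate, since each of these $r$ spanning sets has at least $N$ elements; it also follows directly from $\sum_{i\in I}\|f_i\|^2=\tr S\ge AN$, where $S$ is the frame operator.

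To get the $r$ disjoint bases I would invoke the matroid union theorem in its base-packing form, a classical companion of the matroid partition theorem quoted above: a matroid of rank $N$ on $I$ contains $r$ pairwise disjoint bases if and only if
\[ |I\setminus Y|\ \ge\ r\bigl(N-\rho(Y)\bigr)\qquad\text{for every }Y\subseteq I. \]
So everything reduces to checking this inequality, which is where the frame hypotheses enter. Fix $Y\subseteq I$, let $V=(\span\{f_i:i\in Y\})^{\perp}$, a space of dimension $N-\rho(Y)$, and let $P$ be the orthogonal projection of $\mathbb H_N$ onto $V$. For $i\in Y$ we have $f_i\perp V$, so $Pf_i=0$; and for $x\in V$, $\langle x,Pf_i\rangle=\langle x,f_i\rangle$. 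Hence $\{Pf_i\}_{i\in I\setminus Y}$ is a frame for $V$ with lower bound at least $A$, and taking the trace of its frame operator gives
\[ A\,(N-\rho(Y))\ \le\ \sum_{i\in I\setminus Y}\|Pf_i\|^2\ \le\ \sum_{i\in I\setminus Y}\|f_i\|^2\ \le\ |I\setminus Y|. \]
Since $A\ge r=\lfloor A\rfloor$, this yields the required inequality (and the case $Y=\emptyset$ already recovers $|I|\ge rN$).

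The hard part is conceptual rather than computational. One has to recognize that ``partition $I$ into $r$ spanning sets'' is exactly the same as ``$\mathcal M$ contains $r$ disjoint bases'', the leftover vectors beyond $r$ bases being harmlessly absorbed into a single block; this is what lets base packing carry the combinatorial load, since the matroid partition theorem quoted above addresses the ostensibly different problem of partitioning into \emph{independent} sets of a matroid rather than into spanning ones. The trace estimate itself is routine, the one point needing care being that orthogonal projection onto a subspace preserves the lower frame bound there, which is precisely how the two hypotheses ($\|f_i\|^2\le 1$ and lower bound $A$) cooperate to bound $|I\setminus Y|$ from below.
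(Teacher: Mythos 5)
Your proof is correct, and it takes a genuinely different route from the paper's. You reduce the claim to packing $r$ pairwise disjoint bases in the linear matroid of the frame and verify the Edmonds/Nash--Williams base-packing criterion $|I\setminus Y|\ge r\,(N-\rho(Y))$ by a trace estimate: the projection $P$ onto $(\operatorname{span}\{f_i: i\in Y\})^{\perp}$ annihilates the vectors indexed by $Y$ and preserves the lower frame bound on that complement, so $A\,(N-\rho(Y))\le \sum_{i\in I\setminus Y}\|Pf_i\|^2\le |I\setminus Y|$ using $\|f_i\|^2\le 1$. Each step checks out, including the equivalence between partitions into $r$ spanning sets and the existence of $r$ disjoint bases (absorb the leftovers into one block), and the passage from $A$ to $\lfloor A\rfloor$. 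The paper instead argues by induction on the dimension: it rescales so that the lower bound is at least $1$ and $\|f_i\|^2\le 1/r$, and in each dimension either the vectors split into $r$ linearly independent sets, which a cardinality count forces to be spanning, or the Casazza--Kutyniok--Speegle refinement of Rado--Horn (Theorem \ref{T2}) produces an obstructing subspace $S$; one then projects onto $S^{\perp}$, applies the inductive hypothesis to $\{(I-P)f_i\}$ there, and recombines the spans via Lemma \ref{lem1}. Your argument is shorter and entirely non-inductive, at the cost of importing the base-packing theorem, which is a different classical consequence of matroid union than the covering/partition form of Rado--Horn that the paper cites from Edmonds--Fulkerson; the paper's longer route has the advantage of exercising exactly the machinery (Theorem \ref{T2} and the projection-plus-induction pattern) that it needs anyway for the results that follow.
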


\begin{proof}
We replace $\{f_i\}_{i\in I}$ by $\{\frac{1}{\sqrt{r}}f_i\}_{i\in I}$ so that our
frame has lower frame bound greater than or equal to $1$ and
\[ \|f_i\|^2 \le \frac{1}{r},\ \ \mbox{for all $i\in I$}.\]
Assume the frame operator for $\{f_i\}_{i\in I}$ has eigenvectors $\{e_j\}_{j=1}^N$
with respective eigenvalues $\lambda_{1}\ge \lambda_{2}\ge \ldots \lambda_N \ge 1$.
We proceed by induction on $N$.
\vskip12pt
\noindent $N=1$:  Since
\begin{equation}\label{E1}
 \sum_{i\in I}\|f_i\|^2 \ge1,\ \ \mbox{and}\ \ \|f_i\|^2 \le \frac{1}{r},
 \end{equation}
it follows that $|\{i\in I:f_i \not= 0\}|\ge r$ and so we have a partition into
$r$ spanning sets.
\vskip12pt
\noindent Assume the inductive hypothesis holds for $\mathbb H_N$ and consider $\mathbb H_{N+1}$. 
\vskip12pt
We check two cases:
\vskip12pt
\noindent {\bf Case I}:  Suppose there exists a partition $\{I_j\}_{j=1}^r$ of $I$ so that
$\{f_i\}_{i\in I_j}$ is linearly independent for all $j=1,2,\ldots,r$.
\vskip12pt
In this case, 
\[ N+1 \le (N+1)\lambda_N \le \sum_{j=1}^{N+1}\lambda_j= \sum_{i\in I}\|f_i\|^2 \le |I| \frac{1}{r},\]
and hence,
\[ |I|\ge r(N+1).\]
However, by linear independence, we have
\[ |I|= \sum_{j=1}^r|I_j| \le r(N+1) .\]
Thus, $|I_j|=N+1$ for every $j=1,2,\ldots,r$ and so $\{f_i\}_{i\in I_j}$ 
are all spanning.
\vskip12pt
\noindent {\bf Case II}:  Our family cannot be partitioned into $r$ linearly independent
(spanning) sets.
\vskip12pt
In this case, let $\{I_j\}_{j=1}^r$ and a subspace $\emptyset \not= S \subset \mathbb H_{N+1}$ be
given by Theorem \ref{T2}.  If $S = \mathbb H_{N+1}$, we are done.  Otherwise, let $P$ be the orthogonal
projection onto the subspace $S$.  
Let
\[ I_j' = \{i\in I_j:f_i\notin S\},\ \ I' = \cup_{j=1}^r I_j'.\]
Theorem \ref{T2} (c) implies that $\{f_i\}_{i\in I_j'}$ is linearly independent for all
$j=1,2,\ldots,r$. To see this,
note that the non-zero elements of $\{(I-P)f_i\}_{i\in I}$ are $\{(I-P)f_i\}_{i\in I'}$.
Fix $1\le j\le r$ and assume there are scalars $\{\alpha_i\}_{i\in I_j'}$
with
\[ \sum_{i\in I_j'}\alpha_i(I-P) f_i = 0.\]
This implies {by} Theorem \ref{T2} (c): 
\[ \sum_{i\in I_j'}\alpha_if_i \in S,\ \ \mbox{and so}\ \ \alpha_i =0,\ \mbox{for all $i\in I_j'$}.\]

Now, $\{(I-P)f_i\}_{i\in I'}$
has lower frame bound 1 in $(I-P)(\mathbb H_{N+1})$, dim $(I-P)(\mathbb H_{N+1})\le N$ and
\[ \|(I-P)f_i\|^2 \le \|f_i\|^2 \le \frac{1}{r},\ \ \mbox{for all $i\in I'$}.\]
Applying the induction hypothesis, we can find a partition $\{A_j\}_{j=1}^r$ of $I'$ with
\[ \span \ \{(I-P)f_i\}_{i\in A_j} = (I-P)(\mathbb H_{N+1}),\ \ \mbox{for all $j=1,2,\ldots,r$}.\]
Now, we can apply Lemma \ref{lem1} together with the partition $\{A_j\}_{j=1}^r$ to conclude:
\[ \span\ \{(I-P)f_i\}_{i\in I_j'}= (I-P)(\mathbb H_{N+1}),\]
and hence 
\[ \span\ \{f_i\}_{i\in I_j} = \span\ \{S,\{(I-P)f_i\}_{i\in I_j'}\} = \mathbb H_{N+1}.\]
\end{proof}

Note that we cannot expect to get any linear independence in Proposition \ref{P6}
because our vectors can have arbitrarily small norms and hence there can be
an arbitrarily large number of them.  However, we can remove appropriate 
vectors from the last $r-1$-sets until they are linearly independent and
spanning.  Putting the removed vectors into the first set, we get a partition
into a spanning set and $r-1$ linearly independent spanning sets.

\begin{corollary}\label{P6.5}
Let $\{f_i\}_{i\in I}$ be a Parseval frame for $\mathbb H_N$ and $r$
a natural number so that $\|f_i\|^2 \le \frac{1}{r}$
for every $i\in I$.  Then there is a partition  $\{I_j\}_{j=1}^r$ of $I$ so that
\[ \span\ \{f_i:i\in I_j\} = \mathbb H_N,\ \ \mbox{for all $j=1,2,\ldots,r$}.\]
\end{corollary}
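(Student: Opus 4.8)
The plan is to deduce Corollary~\ref{P6.5} from Proposition~\ref{P6} by a simple rescaling. Since $\{f_i\}_{i\in I}$ is a Parseval frame for $\mathbb H_N$, it is in particular a frame with lower frame bound $A=1$, and rescaling all of the frame vectors by a fixed positive constant $c$ multiplies the frame bounds by $c^2$. So first I would pass to the vectors $g_i = \sqrt{r}\,f_i$, $i\in I$. Then $\{g_i\}_{i\in I}$ is a tight frame for $\mathbb H_N$ with frame bound exactly $r$; in particular its lower frame bound equals $r$. At the same time $\|g_i\|^2 = r\|f_i\|^2 \le r\cdot\frac1r = 1$ for every $i\in I$.

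Next I would apply Proposition~\ref{P6} to $\{g_i\}_{i\in I}$, whose hypotheses are now met verbatim: the vectors have squared norm at most $1$, and the lower frame bound is $A=r$, so $\lfloor A\rfloor = r$. The proposition then produces a partition $\{I_j\}_{j=1}^r$ of $I$ with $\span\{g_i : i\in I_j\} = \mathbb H_N$ for every $j=1,\dots,r$.

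Finally, since multiplication by the nonzero scalar $\sqrt r$ does not change linear spans, $\span\{f_i:i\in I_j\} = \span\{g_i:i\in I_j\} = \mathbb H_N$ for each $j$, which is exactly the asserted conclusion. There is essentially no obstacle here; the only point deserving a moment's care is that the rescaled frame has lower bound precisely $r$, so that the floor in Proposition~\ref{P6} loses nothing. (Should one instead only track that the lower bound is $\ge r$, one notes that merging any surplus spanning blocks of a partition into a single block preserves the spanning property, so one can always arrange for exactly $r$ blocks.)
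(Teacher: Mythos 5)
Your proposal is correct and is essentially the argument the paper intends: the corollary is obtained from Proposition~\ref{P6} by the rescaling $f_i \mapsto \sqrt{r}\,f_i$, which turns the Parseval frame into a tight frame with lower bound exactly $r$ and vectors of squared norm at most $1$, and this is precisely the normalization the paper itself performs (in reverse) in the first line of its proof of Proposition~\ref{P6}. Nothing further is needed.
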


In the following we answer a question concerning the partition of equal norm Parseval
frames into spanning sets which continues Proposition \ref{P5}, and had been
left open in \cite{CT}.  %To this end, we recall a definition from CKS:

We will be considering partitions which maximize dimensions in a very particular way.  

\begin{definition}
Let $\{f_i\}_{i \in I}$ be a family of vectors.
We say that a partition $\{I_1,\ldots,I_M\}$ of the index set $I$ has the maximality property \M\ if whenever 
$\{J_i\}_{i=1}^M$ is any partition of $I$ satisfying that for all $1\le i\le M$,
dim span $\{f_j\}_{j\in J_i} \ge$ dim span $\{f_j\}_{j\in I_i}$, then dim span $\{f_j\}_{j\in I_i}
=$ dim span $\{f_j\}_{j\in J_i}$ for all $i=1,2,\ldots,M$.
\end{definition}

A straightforward consequence of maximality  is the following:

\begin{lemma}\label{keylemma} Let $\mathcal F = \{f_i: i\in I\}$ be a finite collection of vectors in a 
vector space.  Let $M\in \mathbb N$ and $\{I_j:
j=1,\ldots,M\}$ be a partition of
$I$ satisfying property \M.  
If $f_k\in I_p$ and $f_k = \sum_{l \in I_p, l\not=
 k} \alpha_l f_l$, then $f_k\in \span(\FF_{I_j})$ for all $1\le j \le M$.
\end{lemma}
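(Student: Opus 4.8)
The plan is to argue by contradiction: suppose $f_k \in I_p$ satisfies $f_k = \sum_{l\in I_p,\, l\neq k}\alpha_l f_l$, but there is some index $q$ with $f_k \notin \span(\FF_{I_q})$. Since $f_k$ lies in the span of the other members of $\FF_{I_p}$, removing $f_k$ from $I_p$ does not change the dimension: $\dim\span\{f_l : l\in I_p\setminus\{k\}\} = \dim\span\{f_l : l\in I_p\}$. On the other hand, adjoining $k$ to $I_q$ strictly increases the dimension of that block, because $f_k\notin\span(\FF_{I_q})$ gives $\dim\span\{f_l : l\in I_q\cup\{k\}\} = \dim\span\{f_l : l\in I_q\} + 1$.

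Now I would form the new partition $\{J_i\}_{i=1}^M$ obtained from $\{I_i\}$ by moving $k$ out of $I_p$ and into $I_q$; that is, $J_p = I_p\setminus\{k\}$, $J_q = I_q\cup\{k\}$, and $J_i = I_i$ for all other $i$. By the two observations above, $\dim\span(\FF_{J_i}) \ge \dim\span(\FF_{I_i})$ holds for every $i$ (with equality for $i\neq q$ and strict inequality for $i=q$). By the maximality property \M\ applied to this partition $\{J_i\}$, we must have $\dim\span(\FF_{J_i}) = \dim\span(\FF_{I_i})$ for all $i$, contradicting the strict inequality at $i=q$. Hence no such $q$ exists, i.e.\ $f_k\in\span(\FF_{I_j})$ for all $1\le j\le M$.

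The only point requiring a small amount of care is the case $q = p$, which is vacuous since $f_k\in I_p\subseteq\span(\FF_{I_p})$ already, so the contradiction is only sought for $q\neq p$ and the move described above is then well-defined (one genuinely transfers an element between two distinct blocks). I do not anticipate any real obstacle here: the argument is a direct application of the definition of property \M, the only inputs being the two elementary dimension computations (removing a linearly dependent vector preserves dimension; adjoining a vector outside the span raises dimension by one). The lemma is essentially a restatement of what \M\ buys us, so the proof is short.
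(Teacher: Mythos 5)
Your proof is correct and is essentially the same argument as the paper's: you exhibit the new partition obtained by moving $k$ from $I_p$ to $I_q$, note that the removal preserves $\dim\span(\FF_{I_p})$ while the insertion would strictly increase $\dim\span(\FF_{I_q})$ if $f_k\notin\span(\FF_{I_q})$, and then invoke property \M\ to rule this out. The paper states this more tersely, but the content is identical.
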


\begin{proof}  Assuming the hypothesis of the lemma, if $f_k =  \sum_{l \in I_p, l\not=
 k} \alpha_l f_l$, then removing $f_k$ from $I_p$ keeps $\dim \span (\FF_{I_p})$ constant. By property \M, moving $f_k$ into another $I_j$, $j\not= p$ cannot increase $\dim \span(\FF_{I_j})$, and the result follows.
\end{proof}

%************************************************************************************************

If there are linear dependent sets in a partition having property \M\ then we can move suitable vectors from one set to
another.  The following definition will be used to help us keep track of which vectors are being moved.   

\begin{definition} Let $\{f_i: i\in I\}$ be a collection of vectors in a vector space and 
let $\{I_j: j = 1,\ldots,M\}$ be a
partition of
$I$.  
We define a \emph{chain of length one} to be a set $\{(a,b)\}$
with $a \in I_b$, $b \in \{1, 2, \dots, M\}$ and $f_{a} =  \sum_{j\in I_{b}, j \ne a} \alpha_j f_j$
for some choice of constants $\{\alpha_j\}_{j \in I_b, j \ne a}$.
We define a \emph{chain of length $n$} to be a finite
sequence
$\{(a_1, b_1),\ldots, (a_n, b_n)\}$, where $a_i
\in I$ and $b_i\in \{1,\ldots,M\}$, such that 
\begin{itemize}
\item  $(a_1,b_1)$ is a chain of length one,
\item  for $2\le i \le n$, $a_i \in I_{b_i}$ and $f_{a_i} = \alpha f_{a_{i-1}} + \sum_{j\in I_{b_i}, j\not= a_i} \alpha_j f_j$ for
some
$\alpha \not= 0$, and
\item  $a_i \not= a_k$ for $i \not= k$.
%\item [(v)] $b_i \not= b_{i + 1}$ for $1\le i < n$.
\end{itemize}
A chain of length $n$ starting with $a_1 \in L\subset I$ and ending at $a_n\in I$ is a \emph{chain of minimal length starting in $L$ and ending at $a_n$} if every chain
starting in $L$ and ending at $a_n$ has length greater than or equal to $n$. 
\end{definition}

We recall the following lemma. 
 
\begin{lemma}[Casazza, Kutyniok, Speegle]\label{chainlem} Let $\{f_i: i\in I\}$ be a collection of vectors in a vector space, 
let $\{I_j: j = 1,\ldots,M\}$ be a
partition of
$I$, and let $L \subset I_1$.

If $\{(a_1,b_1),\ldots,(a_n, b_n)\}$ is a chain of minimal length starting in $L$ and ending at $a_n$, then for each
$1\le i
\le n$,
$\{(a_1,b_1),\ldots,(a_i,b_i)\}$ is a chain of minimal length starting in $L$ and ending at $a_i$.
\end{lemma}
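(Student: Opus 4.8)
The plan is to prove the lemma by contradiction, exploiting the ``optimal substructure'' principle of shortest paths, adapted to the directed, repetition-free nature of chains. First I would dispose of the trivial half of the assertion: any initial segment $\{(a_1,b_1),\dots,(a_i,b_i)\}$ of a chain $\{(a_1,b_1),\dots,(a_n,b_n)\}$ is itself a chain, starting in $L$ and ending at $a_i$, because the defining clauses --- that $(a_1,b_1)$ is a chain of length one, that $f_{a_j}=\alpha f_{a_{j-1}}+\sum_{l\in I_{b_j},\,l\ne a_j}\alpha_l f_l$ with $\alpha\ne0$ for $2\le j\le i$, and that $a_1,\dots,a_i$ are pairwise distinct --- are all inherited from the ambient chain. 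So the entire content of the lemma is the minimality claim.

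Now fix $i$ with $1\le i\le n$ and suppose, for contradiction, that $\{(a_1,b_1),\dots,(a_i,b_i)\}$ is \emph{not} of minimal length among chains that start in $L$ and end at $a_i$. Then there is a chain $\{(c_1,d_1),\dots,(c_m,d_m)\}$ with $c_1\in L$, $c_m=a_i$ and $m<i$. The idea is to splice this shorter chain onto the tail $(a_{i+1},b_{i+1}),\dots,(a_n,b_n)$ of the given chain, producing a chain that starts in $L$, ends at $a_n$, and has length strictly less than $n$ --- contradicting the minimality of $\{(a_1,b_1),\dots,(a_n,b_n)\}$. The link at the splice point costs nothing: since $c_m=a_i$, the relation $f_{a_{i+1}}=\alpha f_{a_i}+\sum_{l\in I_{b_{i+1}},\,l\ne a_{i+1}}\alpha_l f_l$ with $\alpha\ne0$ that the given chain supplies is at the same time a valid link from $c_m$ to $a_{i+1}$. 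The only thing that can go wrong is that the concatenated sequence repeats an index, namely some $a_q$ with $q>i$ might already occur among $c_1,\dots,c_m$.

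To get around this I would splice in only past the last shared index. Let $k$ be the largest index in $\{i,i+1,\dots,n\}$ with $a_k\in\{c_1,\dots,c_m\}$; this is well defined and satisfies $k\ge i$ because $a_i=c_m$. Write $a_k=c_\ell$, so $\ell\le m$. Then I claim $\{(c_1,d_1),\dots,(c_\ell,d_\ell),(a_{k+1},b_{k+1}),\dots,(a_n,b_n)\}$ is a genuine chain: the indices within the $c$-part are pairwise distinct and so are the indices within the $a$-part, and by the maximality of $k$ no $a_q$ with $k<q\le n$ lies in $\{c_1,\dots,c_m\}\supseteq\{c_1,\dots,c_\ell\}$, so the two parts share no index; the link at position $\ell+1$ holds because $a_k=c_\ell$; and the remaining links are inherited from the given chain. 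This chain starts in $L$ and ends at $a_n$ (when $k=n$ the $a$-part is empty and it ends at $c_\ell=a_n$), and its length is $\ell+(n-k)$.

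Finally, from $\ell\le m<i\le k$ we get $\ell<k$, hence $\ell+(n-k)<k+(n-k)=n$, which is the desired contradiction. I expect the index-disjointness bookkeeping --- recognizing that the tail must be attached only after the last index it shares with the short chain --- to be the single non-routine step; once that cut is identified, the strict drop in length is immediate from $m<i$, and the rest is a matter of checking the chain axioms term by term.
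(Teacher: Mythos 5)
Your proof is correct. The paper establishes the lemma by a downward induction: it reduces to showing that the length-$(n-1)$ prefix of a minimal chain is minimal, and for that it appends the single link $(a_n,b_n)$ to a hypothetical shorter chain $\{(u_1,v_1),\ldots,(u_k,v_k)\}$ ending at $a_{n-1}$ with $k<n-1$; either the appended sequence is a chain of length $k+1<n$ ending at $a_n$, or $a_n=u_i$ for some $i\le k$, in which case truncating the shorter chain at $a_n$ yields a chain of length $i<n$ ending at $a_n$ --- a contradiction either way. You instead prove the claim for every $i$ directly, splicing the entire tail $(a_{k+1},b_{k+1}),\ldots,(a_n,b_n)$ onto the shorter chain, with the splice point chosen after the \emph{last} index the tail shares with that chain. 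Both arguments are instances of the same cut-and-paste principle; yours concentrates all the index-repetition bookkeeping into the single maximal-overlap choice of $k$ (and your length count $\ell+(n-k)<n$, the disjointness check via maximality of $k$, and the degenerate case $k=n$ are all handled correctly), whereas the paper's inductive framing only ever has to worry about one possibly repeated index at a time, at the cost of having to invoke the induction to propagate minimality down to all prefixes.
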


\begin{proof} By induction 
it suffices to show that $\{(a_1,b_1),\ldots,(a_{n-1}, b_{n-1})\}$ is a chain of minimal length.  
Suppose, for
the sake of contradiction, that there did exist a chain $\{(u_1,v_1), \ldots, (u_k,v_k)\}$ such that $u_k = a_{n-1}$ and $k < n- 1$.  Since $\{\chain ab1n\}$ is a
chain, 
$$
f_{a_n} = \alpha f_{a_{n-1}} + \sum_{j\in I_{b_{n}}, j\not= a_n} \alpha_j f_j 
$$
for some $\alpha \not=0$.    Therefore, either $\{\chain uv1k, (a_n, b_n)\}$ is a chain with length $k+1<n$ or $a_n = u_i$ for some $i\le k $, either of which contradicts the minimality of $n$.
\end{proof}

\begin{lemma} 
Let $\mathcal F = \{f_i: i\in I\}$ be a finite collection of vectors, and $M\in \NN$. There exists among all the partitions of $I$ into
$M$ non-empty subsets a partition $\{I_1, I_2, \dots I_M\}$ with the property \M. This partition can be chosen 
so that $\FF_{I_j}$ is linearly independent for all $2 \le j \le M$.
\end{lemma}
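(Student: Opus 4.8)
The plan is to build the partition in two stages. First I would produce, purely by finiteness, \emph{some} partition of $I$ into $M$ non-empty subsets with property~\M. Then, among all partitions with property~\M, I would pass to one whose first block $I_1$ has as many elements as possible, and use Lemma~\ref{keylemma} to argue that the remaining blocks of such an extremal partition cannot harbor a linear dependence.

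For the first stage: since $I$ is finite, there are only finitely many (ordered) partitions of $I$ into $M$ non-empty subsets. Among these I would pick one, $\{I_1,\dots,I_M\}$, maximizing $\sum_{i=1}^M\dim\span(\FF_{I_i})$. If $\{J_i\}_{i=1}^M$ is any such partition with $\dim\span(\FF_{J_i})\ge\dim\span(\FF_{I_i})$ for all $i$, then summing these inequalities and using maximality of the right-hand total forces equality in the total, hence in every term; so $\{I_i\}$ has property~\M. The same observation shows that property~\M\ depends only on the dimension tuple $(\dim\span(\FF_{I_1}),\dots,\dim\span(\FF_{I_M}))$: a partition of $I$ into $M$ non-empty subsets with the same dimension tuple as an \M-partition is again an \M-partition.

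For the second stage: the \M-partitions of $I$ into $M$ non-empty subsets form a finite non-empty collection, so I can choose one, $\{I_1,\dots,I_M\}$, with $|I_1|$ maximal; and I may assume no $f_i$ is zero, since zero vectors can be assigned to $I_1$ without changing any span. I would then claim that $\FF_{I_j}$ is linearly independent for all $2\le j\le M$. Suppose some $\FF_{I_p}$ with $p\ge2$ is linearly dependent. Then there are $k\in I_p$ and scalars $(\alpha_l)_{l\in I_p\setminus\{k\}}$ with $f_k=\sum_{l\in I_p,\,l\ne k}\alpha_l f_l$, and since $f_k\ne0$ this sum is non-empty, so $|I_p|\ge2$. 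As $\{I_i\}$ has property~\M, Lemma~\ref{keylemma} applies and gives $f_k\in\span(\FF_{I_j})$ for every $j$; in particular $f_k\in\span(\FF_{I_1})$. Now move $k$ from $I_p$ into $I_1$, obtaining $I_1'=I_1\cup\{k\}$, $I_p'=I_p\setminus\{k\}$, and $I_j'=I_j$ otherwise; this is still a partition of $I$ into $M$ non-empty subsets because $|I_p|\ge2$. No dimension changes: $\dim\span(\FF_{I_1'})=\dim\span(\FF_{I_1})$ since $f_k\in\span(\FF_{I_1})$, and $\dim\span(\FF_{I_p'})=\dim\span(\FF_{I_p})$ since $f_k$ lies in the span of the other vectors of $I_p$; all other blocks are unchanged. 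Hence $\{I_i'\}$ has the same dimension tuple as $\{I_i\}$ and is therefore again an \M-partition, contradicting the maximality of $|I_1|$ since $|I_1'|=|I_1|+1$. This contradiction proves the claim.

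The step I expect to be the main obstacle is removing a dependence from a block $I_p$ with $p\ge2$: the argument works only because one can find a \emph{single} vector $f_k$ that is simultaneously redundant within $I_p$ (so that deleting it there costs no dimension) and already lies in $\span(\FF_{I_1})$ (so that inserting it there costs no dimension). The first property is immediate from the choice of $k$; the second is exactly what Lemma~\ref{keylemma} provides, and this is the only place property~\M\ of the starting partition is used. The remaining degenerate points — existence of a partition (which requires $|I|\ge M$) and the bookkeeping of zero vectors — are absorbed by the reductions above.
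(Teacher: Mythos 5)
Your proof is correct and follows essentially the same route as the paper: obtain a partition with property \M\ by an extremal choice over the finitely many partitions, then use Lemma~\ref{keylemma} to move redundant vectors from a dependent block $I_p$ ($p\ge 2$) into $I_1$ without changing any span. The only cosmetic differences are that you maximize the sum of the dimensions rather than taking a maximal element of the componentwise order, and you package the ``keep moving until independent'' iteration as a maximality-of-$|I_1|$ argument instead of an explicit finite induction; both variants are valid and arguably a little cleaner than the paper's write-up.
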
 
\begin{proof}
The set of partitions of $I$ into $M$ sets has a partial ordering with respect to which two partitions $\{I_j\}_{j=1}^M$ 
and $\{J_j\}_{j=1}^M$ satisfy  $\{I_j\}_{j=1}^M \le \{J_j\}_{j=1}^M$ if $\dim \FF_{I_j} \ge \dim \FF_{J_j}$ for all
$j \in \{1, 2, \dots, M\}$. $I$ is a finite set, so there are maximal elements. By definition, these partitions have the property \M. 
 
Assume that there is a partition with property \M\ which contains more than one set for which the associated vectors are linearly dependent, say $I_1$ and $I_2$. We can then successively remove indices from $I_2$ and place them into $I_1$ if the associated
vectors are linear combinations of others remaining in the set indexed by $I_2$. After finitely many such moves, $\FF_{I_2}$
is linearly independent. Moreover, by Lemma~\ref{keylemma}, the span of $\FF_{I_1}$ and $\FF_{I_2}$ retain
their dimensions, which means the maximality is preserved.
\end{proof}

If $\FF_{I_2},\ldots, \FF_{I_M}$ are linearly independent, $L = \{i\in I_1: f_i =
\sum_ {j\in I_1, j\not= i} \alpha_j f_j\}$, and $\{\chain ab1n\}$ is a  chain of minimal length starting in $L$, it follows that for each $1\le i < n$, $b_i \not= b_{i + 1}$.  In this case, we can track the changes in the partition as vectors are moved among the sets
in a straightforward manner.

\begin{definition}
If $\FF_{I_2},\ldots, \FF_{I_M}$ are linearly independent, then proceeding by induction, we can define
\[
U_k^1 = I_k, \,\,\,\,\, 1\le k \le M,
\]
and for $2\le i\le n$,
\begin{eqnarray*}
U_k^i &=& U_k^{i-1} \,\text{ for } k\not= b_{i-1}, k\not= b_i,\\
U_{b_i}^i &=& U_{b_i}^{i-1} \cup \{a_{i-1}\},\\
U_{b_{i-1}}^i &=& U_{b_{i-1}}^{i-1} \setminus \{a_{i-1}\}.
\end{eqnarray*}
\end{definition}

\begin{lemma} \label{subc2} Let $\mathcal F = \{f_i: i\in I\}$ be a finite collection of vectors, and $\{I_1, I_2, \dots I_M\}$
a partition with the property \M\ for which  $\FF_{I_2}, \dots, \FF_{I_M}$ are linearly independent. Let $L$ be as above
and assume that $\{\chain ab1n\}$ is a minimal chain starting in $L$.
For each $1\le i \le n$, $f_{a_i}$ can then be written as the sum
\begin{equation}\label{4.5}
f_{a_i} = \sum_{j\in I_{b_i}, j\not\in \{a_p: 1\le p \le n\}} \alpha_j f_j + \sum_{j\in U_{b_i}^i \cap \{a_p: 1 \le p < i\}} \alpha_j f_j.
\end{equation}
\end{lemma}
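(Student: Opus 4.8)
The plan is to prove \eqref{4.5} by induction on $i$, exploiting the definition of a chain together with the update rule for the sets $U_k^i$ and Lemma~\ref{keylemma}. For the base case $i=1$, the pair $(a_1,b_1)$ is a chain of length one with $a_1\in L\subset I_1 = U_{b_1}^1$, so $f_{a_1} = \sum_{j\in I_1, j\not= a_1}\alpha_j f_j$. The second sum in \eqref{4.5} is empty when $i=1$ (the index set $\{a_p : 1\le p < 1\}$ is empty), so it remains only to argue that the terms with $j = a_p$ for $p$ in the range $2\le p\le n$ can be absorbed or removed; here one uses that $L$ is precisely the set of $i\in I_1$ witnessing a linear dependence internal to $I_1$, and minimality of the chain, to ensure the expression can be taken over $j\in I_{b_1}\setminus\{a_p : 1\le p\le n\}$.

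\textbf{Inductive step.} Assume \eqref{4.5} holds for $i-1$. By the chain condition,
\[
f_{a_i} = \alpha f_{a_{i-1}} + \sum_{j\in I_{b_i}, j\not= a_i}\alpha_j f_j
\]
for some $\alpha\not= 0$. Substitute the inductive expression for $f_{a_{i-1}}$, which writes $f_{a_{i-1}}$ as a combination of $\{f_j : j\in I_{b_{i-1}}, j\notin\{a_p\}\}$ together with $\{f_j : j\in U_{b_{i-1}}^{i-1}\cap\{a_p : 1\le p < i-1\}\}$. Now track how the indices sort into $I_{b_i}$ versus the tracked set $U_{b_i}^i$: since the chain is minimal, by Lemma~\ref{chainlem} every initial segment is itself minimal, and in particular $b_{i-1}\not= b_i$, so the indices coming from the $f_{a_{i-1}}$ expansion that lie in $I_{b_{i-1}}$ do \emph{not} lie in $I_{b_i}$; they must therefore appear among the tracked terms. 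The update rule $U_{b_i}^i = U_{b_i}^{i-1}\cup\{a_{i-1}\}$ is exactly what records that $a_{i-1}$ has joined the $b_i$-block, and $U_{b_{i-1}}^i = U_{b_{i-1}}^{i-1}\setminus\{a_{i-1}\}$ records its departure. Combining the two sums and re-indexing, one gets a representation of $f_{a_i}$ over $\{f_j : j\in I_{b_i}, j\notin\{a_p : 1\le p\le n\}\}$ plus $\{f_j : j\in U_{b_i}^i\cap\{a_p : 1\le p < i\}\}$, which is \eqref{4.5}.

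\textbf{Main obstacle.} The delicate point is bookkeeping: one must verify that no index $a_p$ with $p\ge i$ sneaks into the right-hand side, i.e.\ that the expansion genuinely only uses $\{a_p : p < i\}$ among the tracked indices, and that indices from $I_{b_i}$ proper that happen to coincide with some $a_p$ get correctly reclassified. This is where minimality of the chain is essential — if some $a_p$ with $p > i$ appeared in the expansion of $f_{a_i}$, one could splice to build a shorter chain ending at $a_p$ (or detect a repetition $a_i = a_p$), contradicting minimality via Lemma~\ref{chainlem}. I would handle this by carefully splitting each sum according to membership in $\{a_p : 1\le p\le n\}$ at every substitution, using the linear independence of $\FF_{I_2},\dots,\FF_{I_M}$ to keep coefficients well-defined, and invoking minimality to discard the forbidden terms. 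The rest is routine linear algebra and careful index chasing.
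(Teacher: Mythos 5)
Your base case and your ``main obstacle'' paragraph identify the right use of minimality: a nonzero coefficient on some $f_{a_p}$ with $p>i$ would let you splice $\{\chain ab1{i-1},(a_p,b_i)\}$ into a chain ending at $a_p$ of length $i<p$, contradicting Lemma~\ref{chainlem}. But the core of your inductive step contains a genuine error. You substitute the inductive expression for $f_{a_{i-1}}$ into the chain relation. That expression is a combination of $f_j$ for $j\in I_{b_{i-1}}\setminus\{a_p:1\le p\le n\}$ together with $f_j$ for $j\in U_{b_{i-1}}^{i-1}\cap\{a_p:p<i-1\}$. The indices in the first group lie in the block $I_{b_{i-1}}$, which (since $b_{i-1}\ne b_i$) is disjoint from $I_{b_i}$, and they are not chain elements; hence they belong to \emph{neither} index set on the right-hand side of \eqref{4.5} for step $i$. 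Your assertion that such indices ``must therefore appear among the tracked terms'' is false: the tracked set $U_{b_i}^i\cap\{a_p:1\le p<i\}$ consists only of chain indices. The substitution therefore destroys exactly the form you are trying to establish, and the ``combining and re-indexing'' step cannot be carried out.

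The repair is to do the opposite: leave $\alpha f_{a_{i-1}}$ unexpanded. By the update rule $U_{b_i}^i=U_{b_i}^{i-1}\cup\{a_{i-1}\}$, the index $a_{i-1}$ lies in $U_{b_i}^i\cap\{a_p:1\le p<i\}$, so this term already sits in the second sum of \eqref{4.5}. The induction hypothesis is needed elsewhere, namely for the sum $\sum_{j\in I_{b_i},\,j\ne a_i}\alpha_jf_j$: split $I_{b_i}$ into $I_{b_i}\cap U_{b_i}^i$ and $I_{b_i}\setminus U_{b_i}^i=I_{b_i}\cap\{a_p:1\le p<i-1\}$ (the chain elements that started in block $b_i$ and have since been moved out). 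Each such $a_p$ satisfies $b_p=b_i$, so its inductive expression \eqref{4.5} is a combination over $I_{b_i}\setminus\{a_q:1\le q\le n\}$ and over $U_{b_i}^p\cap\{a_q:q<p\}\subseteq U_{b_i}^i\cap\{a_q:q<i\}$ --- these \emph{do} land in the correct index sets. Together with the minimality argument disposing of $j=a_p$, $p>i$, in $I_{b_i}\cap U_{b_i}^i$, this yields \eqref{4.5}. As written, your proof does not go through.
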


\begin{proof} For the case $i = 1$, note that $a_1\in L$ implies that $f_{a_1} = \sum_{j\in L, j\not= a_1} \alpha_j f_j$ for some choice of $\alpha_j$. 
By Lemma~\ref{chainlem} none of these $j\in L$ can be in $\{a_p: 1\le p \le n\}$ since this would not be a chain of minimal length
starting in $L$.  Recalling that $b_i =
1$, the claim is proven for $i = 1$.  

Proceeding by induction, let $i\in \{1,\ldots,n\}$ and we assume (\ref{4.5}) is true for $1\le k < i$. We will show that it is also true for $i$.  Note that
\begin{eqnarray}
f_{a_i} &=& \alpha f_{a_{i-1}} + \sum_{j\in I_{b_i}, j\not= a_{i}} \alpha_j f_j\\
 &=& \alpha f_{a_{i-1}} + \sum_{j\in I_{b_i} \cap U_{b_i}^i, j \not= a_i} \alpha_j f_j + \sum_{j\in I_{b_i} \setminus U_{b_i}^i} \alpha_j f_j\nonumber\\
\label{eqqq} &=& \alpha f_{a_{i-1}} + \sum_{j\in I_{b_i} \cap U_{b_i}^i, j\not = a_i} \alpha_j f_j + \sum_{j\in I_{b_i} \cap \{a_p: 1\le p < i - 1\}}
\alpha_j f_j,
\end{eqnarray}
where we have used in the last two lines that $I_{b_i} \cap \{a_p: 1\le  p < i-1\} = I_{b_i} \setminus U_{b_i}^i.$
Now, suppose for the sake of contradiction that there is a $j\in I_{b_i} \cap U_{b_i}^i$ such that $\alpha_j \not= 0$ and $j = a_p$ for some $p > i$. 
Then $\{\chain ab1{i-1}, (a_p,b_i)\}$ is a chain starting in $L$, which contradicts the minimality of the chain $\{\chain ab1n\}$.  So, using the induction hypothesis on each term in the last
sum in (\ref{eqqq}) and combining terms, one obtains 
\[
f_{a_i} = \alpha f_{a_{i-1}} + \sum_{j\in I_{b_i}, j\not\in \{a_p: 1\le p \le n\}} \tilde\alpha_j f_j + \sum_{j\in U_{b_i}^i \cap \{a_p : 1\le p < i\} }\tilde \alpha_j
f_j
\]
with an apporpriate choice of $\tilde\alpha_j$'s.
\end{proof}

\begin{lemma}\label{claim1}  Let $\mathcal F = \{f_i: i\in I\}$ be a finite collection of vectors, and $\{I_j\}_{j =1}^M$ a partition
of the index set $I$ into $M\in \mathbb N$ non-empty sets which has the proeprty \M\ and for which sets $I_2$, $I_3$, \dots $I_{M}$ index linearly independent sets.  Moreover, let $L = \{i \in I_1: f_i = \sum_{j\in I_1, j\not= i} \alpha_j f_j\}$, $L_0 = \{i\in I: $ there is a chain starting in $L$ and ending at $i\}$, and $L_j = L_0 \cap I_j$ for $1\le j \le M$.  If $\{(a_1, b_1),\ldots, (a_n, b_n)\}$ is a  chain of minimal length starting in $L$ and ending at $a_n$, then $f_{a_n} \in \span
(\FF_{L_m})$ for all
$1\le m
\le M$.  
\end{lemma}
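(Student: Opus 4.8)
The plan is to prove this by strong induction on the length $n$ of the minimal chain, establishing for every minimal chain $\{\chain ab1n\}$ starting in $L$ and ending at $a_n$ that $f_{a_n}\in\span(\FF_{L_m})$ for all $1\le m\le M$. By Lemma~\ref{chainlem}, each prefix $\{\chain ab1i\}$ with $i<n$ is itself a minimal chain starting in $L$, so the induction hypothesis applies to every $f_{a_i}$ with $i<n$ and yields $f_{a_i}\in\span(\FF_{L_{m'}})$ for all $m'$. The base case $n=1$ reads $a_1\in L$, i.e.\ $f_{a_1}=\sum_{j\in I_1,\,j\ne a_1}\alpha_jf_j$, and is handled exactly as the two cases below (there are simply no chain moves to carry out).

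For the inductive step, fix $m$. Suppose first that $m=b_n$. I read off equation~\eqref{4.5} with $i=n$,
\[
 f_{a_n}=\sum_{j\in I_{b_n},\ j\notin\{a_p:1\le p\le n\}}\alpha_jf_j+\sum_{j\in U_{b_n}^n\cap\{a_p:1\le p<n\}}\alpha_jf_{a_p},
\]
and note, arguing as in the proof of Lemma~\ref{subc2}, that every index $j$ with $\alpha_j\ne0$ in the first sum is a non-chain index of $I_{b_n}$ reached by appending $(j,b_n)$ to a suitable prefix of the chain, hence lies in $L_0\cap I_{b_n}=L_{b_n}$; and every $f_{a_p}$ in the second sum has $p<n$, so $f_{a_p}\in\span(\FF_{L_{b_n}})$ by the induction hypothesis. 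Consequently $f_{a_n}\in\span(\FF_{L_{b_n}})=\span(\FF_{L_m})$. Since this argument uses~\eqref{4.5} as an explicit representation and performs no division, no circularity arises.

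Suppose now that $m\ne b_n$. I pass to the partition $\{U_k^n\}_{k=1}^M$ produced by moving the entire chain. Applying~\eqref{4.5} to each $f_{a_{i-1}}$ shows that the removal $U_{b_{i-1}}^{i}=U_{b_{i-1}}^{i-1}\setminus\{a_{i-1}\}$ does not change that dimension, while an addition can only increase a dimension; so, starting from $\{U_k^1\}=\{I_k\}$ and invoking property~\M\ at each of the steps $i=2,\dots,n$, the partition $\{U_k^n\}$ again has property~\M. Equation~\eqref{4.5} at $i=n$ writes $f_{a_n}$ as a linear combination of $\{f_l:l\in U_{b_n}^n\setminus\{a_n\}\}$, so Lemma~\ref{keylemma} applied to $\{U_k^n\}$ gives $f_{a_n}\in\span(\FF_{U_k^n})$ for all $k$, in particular $f_{a_n}=\sum_{l\in U_m^n}c_lf_l$. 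Here $U_m^n=(I_m\setminus R_m)\cup A_m$ with $R_m=\{a_p:1\le p\le n-1,\ b_p=m\}$ and $A_m=\{a_p:1\le p\le n-1,\ b_{p+1}=m\}$, so $R_m,A_m\subseteq L_0$ and $a_n\notin U_m^n$. An index $l\in A_m$ is some $a_p$ with $p<n$, whence $f_l\in\span(\FF_{L_m})$ by the induction hypothesis; an index $l\in(I_m\setminus R_m)\cap L_0$ lies in $L_m$; and if some $l\in(I_m\setminus R_m)\setminus L_0$ had $c_l\ne0$, then solving for $f_l$ and substituting the induction representations of the $f_{a_p}$ with $a_p\in A_m$ (these lie in $\span(\FF_{L_m})$ and, because $l\notin L_0\supseteq L_m$ and $a_n\notin I_m\supseteq L_m$, involve neither $f_l$ nor $f_{a_n}$) would produce $f_l=\tfrac1{c_l}f_{a_n}+\sum_{j\in I_m,\ j\ne l}\gamma_jf_j$ with $\tfrac1{c_l}\ne0$, so that $\{\chain ab1n,(l,m)\}$ would be a chain starting in $L$ and ending at $l$, contradicting $l\notin L_0$. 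Hence every surviving term lies in $\span(\FF_{L_m})$, and $f_{a_n}\in\span(\FF_{L_m})$, completing the induction.

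I expect the case $m\ne b_n$ to be the main obstacle: Lemma~\ref{keylemma} only places $f_{a_n}$ in the much larger span of all of $\FF_{U_m^n}$, and the real work is the chain-extension argument that forces any index $l\notin L_0$ occurring in a representation of $f_{a_n}$ to lie in $L_0$ after all, a contradiction. Two auxiliary points also need care: that moving the whole chain preserves property~\M\ (using~\eqref{4.5} for the removed vectors together with the maximality of $\{I_k\}$), and that the representation~\eqref{4.5} fed into Lemma~\ref{keylemma} genuinely omits $f_{a_n}$, so that the induction does not become circular.
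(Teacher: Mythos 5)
Your proof is correct and follows essentially the same route as the paper's: induction on the chain length (justified by Lemma~\ref{chainlem}), the representation of Lemma~\ref{subc2}, the observation that the moved partition $\{U_k^n\}$ retains property \M, Lemma~\ref{keylemma} to place $f_{a_n}$ in $\span(\FF_{U_m^n})$, and a chain-extension contradiction for any index outside $L_0$ carrying a nonzero coefficient. The only (harmless) deviation is in the endgame: where the paper splits into two cases according to whether some $\alpha_{j_0}^{(p)}\ne 0$ and substitutes the raw chain relations (\ref{iii}), you substitute the induction-hypothesis representations $f_{a_p}\in\span(\FF_{L_m})$ and append $(l,m)$ to the full chain uniformly, which slightly streamlines but does not change the argument.
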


\begin{proof}   We show that, if $\{\chain ab1n\}$ is a  chain of minimal length starting in $L$ and ending at $a_n$, then
$f_{a_n}\in
\span(\FF_{L_m})$ for each $1\le m\le M$.  

For $n = 1$, fix $m\in \{1,\ldots,M\},$ and observe that $a_1\in L$.  Hence, by 
Lemma~\ref{keylemma}, we can 
write $f_{a_1} = \sum_{l\in I_m} \alpha_l f_l$.  For each $l$ such that
$\alpha_l\not= 0$, $(a_1,1), (l,m)$ is a chain ending at $l$.  Therefore, 
$f_{a_1} \in \span(\FF_{L_m})$, as desired.

By Lemma \ref{subc2} and the fact that $I_{b_i} \setminus \{a_p: 1\le p \le n\}\subset U^k_{b_i}$ for all $1\le k \le n$, we have that
 $f_{a_i} \in \span (\FF_{U_{b_i}^i \setminus \{a_i\}})$.  Therefore, $\dim\span(\FF_{U_{b_i}^i}) = \dim\span(\FF_{U_{b_i}^{i+ 1}})$.  In particular,  the partition $\{U_k^i:1\le k \le M\}$ satisfies property \M.

By property \M,
Lemma~\ref{subc2}, and Lemma~\ref{keylemma}, $f_{a_n} \in \span(\FF_{U_m^n})$ for each $1\le m \le M$.  Therefore, for $m \not = b_n$, there exist $\alpha_j^{(0)}$
such that
\begin{eqnarray}
f_{a_n} &=& \sum_{j\in U_m^n} \alpha_j^{(0)} f_j = \sum_{j\in U_m^n \cap I_m} \alpha_j^{(0)} f_j + \sum_{j\in U_m^n \setminus I_m} \alpha_j^{(0)} f_j \nonumber \\
&=& \label{ppp} \sum_{j\in U_m^n\cap I_m} \alpha_j^{(0)} f_j + \sum_{j\in \{a_p : b_{p + 1} = m, 1 \le p < n-1\}} \alpha_j^{(0)} f_j.
\end{eqnarray}
By definition of a chain, for each $a_p$ such that $b_{p + 1} = m$ and $1\le p < n - 1$, 
\begin{equation}\label{iii}
f_{a_p} = \alpha^p f_{a_{p + 1}} + \sum_{j\in I_m, j\not= a_{p + 1}} \alpha_j^{(p)} f_j,
\end{equation}
for some choice of $\alpha_j^{(p)}$ and some $\alpha^{(p)}\not= 0$.

Fix $j_0$ such that $\alpha_{j_0}^{(0)} \not= 0$ in (\ref{ppp}).  We show that $j_0\in L_m$, which finishes the proof of
the lemma.  Clearly, if
$j_0\in \{a_1,\ldots, a_n\}$, then we are done, so we assume that $j_0 \not\in \{a_1,\ldots, a_n\}$.  
 
Case 1: There is some $1\le p < n - 1$ such that $b_{p + 1} = m$ and $\alpha_{j_0}^{(p)} \not= 0$.  Then, one can solve (\ref{iii}) for $f_{j_0}$ to obtain 
\[
f_{j_0} = \beta f_{a_p} + \sum_{j\in I_m, j\not= j_0, j\not= a_p} \beta_j f_j
\]
for some $\beta \not= 0$.  Hence, $\chain ab1p, (j_0, m)$ is a chain and $j_0 \in L_m$.  

Case 2: For each $1\le p< n-1$ such that $b_{p + 1} = m$, we have $\alpha_{j_0}^{(p)} = 0$.  We have   
\begin{eqnarray*}
f_{a_n} &=& \sum_{j\in U_m^n \cap I_m} \alpha_j^{(0)} f_j + \sum_{j\in \{a_p : b_{p + 1} = m, 1\le p < n - 1\}} \alpha_j^{(0)} f_j  \\
\label{belangrik} &=&\sum_{j\in U_m^n \cap I_m}  \alpha_j^{(0)} f_j + \sum_{p\in \{p : b_{p+1} = m, 1\le p < n - 1\}} \alpha_{a_p}^{(0)} f_{a_p}\\
&=&\sum_{j\in U_m^n \cap I_m}  \alpha_j^0 f_j + \sum_{p\in \{p : b_{p+1} = m, 1\le p < n - 1\}} \alpha_{a_p}^{(0)} \bigl( \alpha^{(p)} f_{a_{p + 1}} + \sum_{j\in
I_m, j\not= a_{p + 1}} \alpha_j^{(p)} f_{j}\bigr) \\
&=&\alpha_{j_0}^{(0)} f_{j_0} + \sum_{j\in I_m, j\not= j_0} \tilde \alpha_j f_j,
\end{eqnarray*}  
where the first equality is (\ref{ppp}), the second equality is a re-indexing, the third equality follows from (\ref{iii}), and the last equality holds for some choice of $\tilde \alpha_j$ by combining sums, since $\alpha_{j_0}^{(p)} = 0$ for all $1\le p < n-
1$ such that $b_{p + 1} =
m$, and $j_0\not\in \{a_1,\ldots,a_n\}$.  Therefore, 
$\{\chain ab1{n}, (j_0, m)\}$ is a chain and $j_0\in L_m$. 
\end{proof}

The purpose of this is to prove the following:

%\begin{theorem}\label{T4}
%Let $\{f_i\}_{i\in I}$ be a finite collection of vectors in a vector space $X$.  Let 
%$\{I_i\}_{i=1}^M$ be a partition of $I$ satisfying property \M.
%Then $\{f_j\}_{j\in I_i}$ is linearly independent for all $i=1,2,\ldots,M$ if and only
%if for every subset $K\subset I$,
%\begin{equation}\label{Eq1} \frac{|K|}{\dim \span\ \{f_j\}_{j\in K}} \le M.
%\end{equation}
%\end{theorem}

%\begin{proof}
%Forward direction.  If we can partition into $M$ linearly independent sets of any type, then equation (\ref{Eq1}) holds by the Rado Horn theorem.

%Reverse direction.  
%\end{proof}

\begin{theorem}\label{cor5}
Let $\{f_i\}_{i\in I}$ be a finite collection of vectors in a finite dimensional vector space $X$.  Assume

(1)  $\{f_i\}_{i\in I}$ can be partitioned into $r+1$-linearly independent sets, and

(2)  $\{f_i\}_{i\in I}$ can be partitioned into a set and $r$ linearly independent spanning
sets.

Then there is a partition $\{I_i\}_{i=1}^{r+1}$ so that $\{f_j\}_{j\in I_i}$ is a 
linearly independent spanning set for all $i=2,3,\ldots,r+1$ and $\{f_i\}_{i\in I_{1}}$
is a linearly independent set.
\end{theorem}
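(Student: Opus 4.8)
The plan is to build the required partition from a maximal‑dimension partition, using hypothesis~(1) (through the Rado--Horn inequality) to govern linear independence and hypothesis~(2) to force the spanning property. First I would invoke the lemma preceding the chain material to fix a partition $\{I_1,\dots,I_{r+1}\}$ of $I$ into non‑empty sets having property \M\ with $\FF_{I_j}$ linearly independent for all $2\le j\le r+1$; this makes the whole chain apparatus of Lemmas~\ref{chainlem}--\ref{claim1} available.

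Next I would show that $\FF_{I_1}$ is itself linearly independent. Suppose not; set $L=\{i\in I_1:f_i\in\span\FF_{I_1\setminus\{i\}}\}\ne\emptyset$, let $L_0$ be the set of indices reachable by a chain starting in $L$, and $L_m=L_0\cap I_m$. By Lemma~\ref{claim1}, $f_i\in\span\FF_{L_m}$ for every $i\in L_0$ and every $m$, so $\span\FF_{L_m}=\span\FF_{L_0}=:S$ for all $m$; for $m\ge2$, $\FF_{L_m}$ is linearly independent and spans $S$, hence $|L_m|=\dim S$. Hypothesis~(1) and Theorem~\ref{T2} give $|E|\le(r+1)\dim\span\FF_E$ for every $E\subseteq I$; with $E=L_0$ and $|L_0|=|L_1|+r\dim S$ this forces $|L_1|\le\dim S$, so $\FF_{L_1}$ is a linearly independent spanning set of $S$. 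But $L\subseteq L_1$, and for $i_0\in L$ the relation $f_{i_0}=\sum_j\alpha_jf_j$ involves only indices of $L_1$ (each pair $(i_0,1),(j,1)$ with $\alpha_j\ne0$ being a chain), contradicting linear independence of $\FF_{L_1}$. Hence $\{I_m\}$ is a partition of $I$ into $r+1$ linearly independent sets with property~\M.

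It then remains to arrange, after relabelling, that $\FF_{I_j}$ spans $X$ for $j\ge2$. Writing $N=\dim X$, linear independence gives $\dim\span\FF_{I_m}=|I_m|\le N$, while hypothesis~(2) supplies a partition $\{J_m\}$ with $|J_m|=N$ for $m\ge2$, so $|I|\ge rN$. I would show that at most one of the $I_m$ has size $<N$; that part then becomes $I_1$ and the remaining $r$ parts are linearly independent spanning sets. When $|I|=rN$ the remaining part is empty and the conclusion is immediate from hypothesis~(2); otherwise I would argue by contradiction, using that hypothesis~(2) exhibits $r$ disjoint bases inside $\FF_I$ and running an exchange (augmenting‑path) argument that enlarges a short part while restoring the others, contradicting property~\M.

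The hard part will be this last step. The chain argument for linear independence is essentially already packaged in Lemmas~\ref{chainlem}--\ref{claim1}, but converting ``two non‑spanning parts'' into a contradiction requires a genuine exchange argument, and a naive coordinate‑wise comparison of our \M-profile with the profile $(\ast,N,\dots,N)$ coming from hypothesis~(2) fails, since $\dim\span\FF_{J_1}$ can be strictly smaller than $\min_m|I_m|$. Getting that exchange bookkeeping right --- equivalently, recognising the statement as the matroid‑union fact that a set which can be partitioned into $r+1$ linearly independent sets and which also contains $r$ disjoint bases splits into $r$ disjoint bases plus a linearly independent remainder --- is where the real work lies.
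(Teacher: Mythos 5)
Your first two steps are sound and essentially reproduce the paper's core argument: starting from a partition with property \M\ whose last $r$ parts are linearly independent, the chain machinery (Lemma~\ref{claim1}) applied to $L$, $L_0$ and $S=\span(\FF_{L_0})$ gives $|L_0|=|L_1|+r\dim S>(r+1)\dim S$ while $\dim\span(\FF_{L_0})=\dim S$, contradicting the Rado--Horn inequality that hypothesis~(1) supplies via Theorem~\ref{T2}. The problem is your third step. Upgrading $r$ of the parts to spanning sets is left as an unexecuted ``exchange (augmenting-path) argument,'' and you yourself diagnose why the obvious route fails: the partition from hypothesis~(2) has dimension profile $(\ast,N,\dots,N)$, which need not dominate your \M-profile coordinatewise, so property \M\ yields no contradiction. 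The matroid-union fact you then invoke is precisely the content of the theorem being proved, so deferring it to ``bookkeeping'' leaves a genuine gap.

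The missing idea is to build the spanning property into the extremal choice rather than trying to recover it afterwards. The paper does not start from an arbitrary \M-partition: it maximizes $\dim\span(\FF_{I_1})$ over all partitions whose last $r$ parts are already linearly independent spanning sets --- a non-empty class by hypothesis~(2). Such a maximizer automatically has property \M: if $\{J_i\}$ dominates it coordinatewise, then $\dim\span(\FF_{J_i})=\dim X$ for $i\ge 2$, so $\{J_i\}$ is itself a competitor in the maximization and $\dim\span(\FF_{J_1})=\dim\span(\FF_{I_1})$ follows. With spanning thus guaranteed from the outset, the only thing left to prove is linear independence of $\FF_{I_1}$, which is exactly the chain/Rado--Horn argument you already have; hypothesis~(1) enters only there, and no exchange argument is needed.
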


\begin{proof}
We choose the partition $\{I_i\}_{i=1}^{r+1}$ 
of $I$ that maximizes dim span $\{f_j\}_{j\in I_{1}}$ taken
over all partitions so that the last $r$ sets span $X$.   If $\{J_i\}_{i = 1}^{r + 1}$ is a partition of $I$ such that  for all $1\le i\le r + 1$,
dim span $\{f_j\}_{j\in J_i} \ge$ dim span $\{f_j\}_{j\in I_i}$, then dim span $\{f_j\}_{j\in I_i}
=$ dim span $\{f_j\}_{j\in J_i}$ for all $i=2,\ldots,r+1$ since dim span $\{f_j\}_{j\in I_i} = \dim X$, and $\dim \span \{f_j\}_{j\in I_{1}} = \dim \span \{f_j\}_{j\in J_{1}}$ by construction.
This means, the chosen partition has the property \M\ and
the properties asserted by Lemma~\ref{claim1}.   Suppose that this does not partition $\FF_I$ into linearly independent sets, i.e. $\FF_{I_1}$ is not linearly independent.  As in Lemma~\ref{claim1}, let $L = \{i\in I_1: f_i =
\sum_ {j\in I_1, j\not= i} \alpha_j f_j\}$ be the index set of the ``linearly dependent vectors" in $I_1$, 
$L_0 = \{i\in I: \text {\rm there is a chain starting in $L$ ending at $i$}\},$ and
$L_j = L_0\cap I_j, 1\le j \le r+1.$

Let $S = \span (\FF_{L_0})$.  By Lemma~\ref{claim1}, $S = \span(\FF_{L_j})$ for all $1\le j\le r+1$.  Moreover,
for
$1\le j
\le r+1$, $i\in L_j$ implies that $i\in I_j$ and $f_i \in S$.  Therefore,
\begin{equation*}
S \subset \span \{f_i: i\in L_j\} \subset \span \{f_i: i\in I_j, f_i\in S\} = S.
\end{equation*}

Let $J = \{i\in I : f_i\in S\}$. By
construction, $L\subset J$. Let $d = \dim(S)$ and see that, by the preceding portion of this proof, $\dim \span(\FF_J) = d$.  Moreover, 
\[|J| = |L_1| + \cdots + |L_M| = |L_1| + rd > d(r+1),\] 
because $L_1$ is linearly dependent, since it contains $L$ by virtue of chains of length one.  Therefore, for $J = \{i\in I: f_i\in S\}$,  $\frac {|J|} {\dim\span(\FF_J)} > r+1$. This is in contradiction with assumption (1), which implies
by the Rado Horn theorem that $|J|/d \le r+1$.
%

%
%Therefore, by (1) and Theorem \ref{T2}, we know that Equation \ref{Eq1} holds.
%Applying Theorem \ref{T4}, we get the result.
\end{proof}

Combining Propositions \ref{P5} and \ref{P6} with Theorem~\ref{cor5} we conclude:

\begin{corollary}
Let $\{f_i\}_{i\in I}$ be an equal norm Parseval frame for $\mathbb H_N$ with
$|I|=rN+k$ with $0\le k <N$.  Then there is a partition $\{I_i\}_{i=1}^{r+1}$ of $I$
so that for $i\in \{2,\ldots,r+1\}$, $\{f_j\}_{j\in I_i}$ is a linearly independent spanning set
and $\{f_j\}_{j\in I_{1}}$ is linearly independent.
\end{corollary}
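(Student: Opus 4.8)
The plan is to verify that the two hypotheses of Theorem~\ref{cor5} hold for an equal norm Parseval frame $\{f_i\}_{i\in I}$ with $|I|=rN+k$, $0\le k<N$, and then simply quote that theorem. First I would dispose of the trivial case $r=0$: then $|I|=k<N$, so $\{f_i\}_{i\in I}$ is a single linearly independent set (an equal norm Parseval frame that cannot span cannot have a dependence, or more simply one applies Proposition~\ref{P5} with the convention that ``$r$ spanning sets'' is vacuous and $\{f_i\}_{i\in I}$ itself is the lone independent set); the conclusion with $r+1=1$ then reads that $\{f_j\}_{j\in I_1}$ is linearly independent, which is exactly Proposition~\ref{P5} in the case $k>0$, $r=0$. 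So assume $r\ge 1$.

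Next I would produce hypothesis (1) of Theorem~\ref{cor5}, namely that $\{f_i\}_{i\in I}$ can be partitioned into $r+1$ linearly independent sets. This is precisely the content of Proposition~\ref{P5}: if $0<k<N$ it gives a partition into $r+1$ linearly independent sets directly, and if $k=0$ it gives a partition into $r$ linearly independent spanning sets, which is in particular a partition into $r\le r+1$ linearly independent sets (pad with an empty set if one insists on exactly $r+1$ blocks). Then I would produce hypothesis (2), that $\{f_i\}_{i\in I}$ can be partitioned into a set together with $r$ linearly independent spanning sets. Here I would invoke Proposition~\ref{P6} (equivalently Corollary~\ref{P6.5}): since $\{f_i\}_{i\in I}$ is an equal norm Parseval frame, each $\|f_i\|^2 = N/(rN+k)\le 1/r$, and its lower frame bound is $1$, so Proposition~\ref{P6} with $\lfloor A\rfloor = 1$ is not quite what is wanted — instead I would rescale: $\{\sqrt r\, f_i\}_{i\in I}$ has frame bounds $r$, hence lower bound $A=r$ and $\lfloor A\rfloor = r$, and $\|\sqrt r f_i\|^2 = rN/(rN+k)\le 1$, so Proposition~\ref{P6} yields a partition $\{I_j\}_{j=1}^r$ with $\span\{f_i : i\in I_j\}=\mathbb H_N$ for all $j$ (scaling does not affect spans). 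That gives $r$ spanning sets partitioning $I$; to fit the template of (2) I would then thin each of these $r$ spanning sets down to a linearly independent spanning subset, collecting all the removed indices into a new first block $I_1$, exactly as described in the remark preceding Corollary~\ref{P6.5}. This produces a partition of $I$ into one (possibly dependent) set and $r$ linearly independent spanning sets, which is hypothesis (2).

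With (1) and (2) in hand, Theorem~\ref{cor5} immediately produces a partition $\{I_i\}_{i=1}^{r+1}$ with $\{f_j\}_{j\in I_i}$ a linearly independent spanning set for $i=2,\dots,r+1$ and $\{f_j\}_{j\in I_1}$ linearly independent, which is the assertion of the corollary. The only real content beyond citing prior results is the bookkeeping that turns Proposition~\ref{P6}'s $r$ spanning sets into hypothesis (2) of Theorem~\ref{cor5}; I would state that thinning step carefully (removing a vector that is a linear combination of the others in its block preserves the span of that block, and after finitely many such removals the block is linearly independent and still spanning), but it is routine. I do not anticipate a genuine obstacle — the substance was all front-loaded into Theorem~\ref{cor5}, Proposition~\ref{P5}, and Proposition~\ref{P6}; the corollary is a packaging statement. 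If anything, the one point to be careful about is making sure the rescaling to apply Proposition~\ref{P6} is legitimate, i.e.\ that multiplying every frame vector by $\sqrt r$ multiplies the lower frame bound by $r$ and leaves all spans unchanged, which is clear.
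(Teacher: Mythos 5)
Your proposal is correct and follows exactly the route the paper takes: the paper's entire proof is the one-line remark that the corollary follows by ``combining Propositions \ref{P5} and \ref{P6} with Theorem~\ref{cor5},'' and you have simply filled in the same bookkeeping (Proposition \ref{P5} for hypothesis (1), the rescaling/Corollary \ref{P6.5} plus the thinning remark for hypothesis (2)). Note only that the $r=0$ case is vacuous, since a Parseval frame for $\mathbb H_N$ must span and hence has at least $N$ vectors.
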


If $r\ge 2$ then this result implies that each set of frame vectors has a complement
which is spanning, which was already obtained in Section~3. The insight of this last corollary
is that with the lower norm bound implicit in the equal-norm Parseval property,
the partition can be chosen to consist of linearly independent sets. Moreover, the complement of each set can then be partitioned into at least $r-1$ spanning sets.

\paragraph{\bf Acknowledgments.} The authors would like to thank the American Institute of Mathematics for hospitality and the support of the Structured Quartet Research Ensemble
on the Kadison Singer Problem.

%\end{document}

\end{document}